\newcommand{\KKK}{\mathbb{K}}
\newcommand{\rV}{R}
\newcommand{\hhh}{h}
\newcommand{\bimG}{\mathcal X_E}
\newcommand{\UU}[1][\xx]{\mathsf{U}_{#1}}
\newcommand{\VV}[1][\xx]{\mathsf{V}_{#1}}
\newcommand{\PP}[1][\xx]{\mathsf{P}_{#1}}
\newcommand{\VVt}{\widetilde{\mathsf{V}}_\xx}
\newcommand{\PPt}{\widetilde{\mathsf{P}}_\xx}
\newcommand{\UUt}{\widetilde{\mathsf{U}}_\xx}
\newcommand{\matrM}{\mathsf M}
\newcommand{\AAA}{C^*(E)}
\newcommand{\III}{J}
\newcommand{\ee}{{\mathbf e}}
\newcommand{\xx}{{\mathbf x}}
\newcommand{\yy}{{\mathbf y}}
\newcommand{\zz}{{\mathbf z}}
\newcommand{\Z}{\mathbb{Z}} 
\newcommand{\ZZ}{\mathbb{Z}}
\newcommand{\upindex}[1][\xx]{L^+_{#1}}
\newcommand{\downindex}[1][\xx]{L^-_{#1}}
\newcommand{\wev}{s(e)=w,r(e)=v} 
\newcommand{\lwevprime}{ s(e')=w,r(e')=v'}  
\newcommand{\upm}[1]{\left\lbrack #1\right\rbrack}
\newcommand{\downm}[1]{\left\langle #1\right\rangle}
\newcommand{\EE}[2]{\mathsf{E}_{#1,#2}}
\newenvironment{lbmatrix}{\left[\begin{smallmatrix}}{\end{smallmatrix}\right]}
\DeclareMathOperator{\coker}{coker}
\DeclareMathOperator{\im}{Im}
\theoremstyle{plain}
\newtheorem{theorem}{Theorem}[section]
\newtheorem{lemma}[theorem]{Lemma}
\newtheorem{corollary}[theorem]{Corollary}
\newtheorem{claim}[theorem]{Claim}
\newtheorem{prop}[theorem]{Proposition}
\newtheorem{fact}[theorem]{Fact}
\theoremstyle{remark}
\newtheorem{remark}[theorem]{Remark}
\theoremstyle{definition}
\newtheorem{definition}[theorem]{Definition}
\newtheorem{examp}[theorem]{Example}
\numberwithin{equation}{section}
\begin{document} 

\title{Index maps in the $K$-theory of graph algebras}

\author{Toke Meier Carlsen}

\author{S\o ren Eilers}

\author{Mark Tomforde} 

\address{Department of Mathematical Sciences\\
Norwegian University of Science and Technology\\
NO-7491 Trondheim\\Norway}
\email{tokemeie@math.ntnu.no}

\address{Department for Mathematical Sciences\\University of
Copenhagen\\Universitetsparken 5\\DK-2100 Copenhagen \O \\Denmark}
\email{eilers@math.ku.dk}

\address{Department of Mathematics \\ University of Houston \\
Houston, TX 77204-3008 \\USA} \email{tomforde@math.uh.edu}

\thanks{This research was supported by the NordForsk Research Network
  ``Operator Algebras and Dynamics'' (grant \#11580). The first named author was supported by the Research Council of Norway. The third author was supported by NSA Grant H98230-09-1-0036.}

\date{\today}

\subjclass[2000]{46L55}

\keywords{graph $C^*$-algebras, classification, extensions,
$K$-theory}

\begin{abstract} Let $C^*(E)$ be the graph $C^*$-algebra associated to
  a graph $E$ and let $\III$ be a gauge-invariant ideal in $C^*(E)$.   
  We compute the cyclic six-term exact sequence in $K$-theory
  associated to the extension 
  \begin{equation*}
    0 \longrightarrow \III \longrightarrow C^*(E) \longrightarrow C^*(E)
  / \III \longrightarrow 0
  \end{equation*}
  in terms of the adjacency matrix associated to $E$.  The ordered 
  six-term exact sequence is a complete stable isomorphism invariant
  for se\-ve\-ral classes of graph $C^*$-algebras, for instance those  containing a unique proper nontrivial
  ideal. Further, in many other cases, finite collections of such sequences comprise complete invariants.

Our results allow for explicit computation of the
  invariant, giving an exact sequence in terms of
  kernels and cokernels of matrices determined by the vertex matrix of
  $E$. 
\end{abstract}

\maketitle

\section{Introduction}

The cyclic six-term exact sequence
\begin{equation} \label{eq:1} 
\begin{split}
\xymatrix{
{K_0(\III)}\ar[r]^-{\iota_*}&{K_0\bigl(\AAA\bigr)}\ar[r]^-{\pi_*}&{K_0\bigl(\AAA/\III\bigr)}\ar[d]^-{\partial_0}\\
{K_1\bigl(\AAA/\III\bigr)}\ar[u]^-{\partial_1}&{K_1\bigl(\AAA\bigr)}\ar[l]^-{\pi_*}&{K_1(\III)}\ar[l]^-{\iota_*}}
\end{split}
\end{equation}
is a complete stable isomorphism invariant for a graph $C^*$-algebra
$\AAA$ of real rank zero containing a proper nontrivial ideal $\III$ when any of the following are satisfied
\begin{itemize}
\item $\III$ is the  unique proper nontrivial ideal of $\AAA$ (\cite[Theorem 4.5]{semt:cnga}),
\item $\III$ is a smallest proper nontrivial ideal of $\AAA$, and $\AAA/\III$ is AF  (\cite[Corollary 6.4]{segrer:ccfis}),
\item $\III$ is a largest proper nontrivial ideal of $\AAA$, and $\III$ is AF  (\cite[Theorem 4.7]{semt:cnga}).
\end{itemize}
In other cases (cf. \cite{segrer:ccfis}) a complete invariant may be obtained by combining several six-term exact sequences associated to $\AAA$ and its ideals.

It is therefore important to address how to compute sequences of the form in \eqref{eq:1}. In the
existing literature it is shown that if $E$ is a row-finite graph with
no sinks, then 
\begin{equation*}
K_0\bigl(C^*(E)\bigr) \cong \coker (A^t-I)\text{ and }K_1\bigl(C^*(E)\bigr)
\cong \ker (A^t-I), 
\end{equation*}
where $A^t - I : \Z^{E^0} \to \Z^{E^0}$ is the
linear map given by the transpose of the vertex matrix $A$ of $E$ minus the identity
matrix $I$.  
This description of the $K_0$-group also includes a description of its order, and a similar computation exists when sinks and infinite emitters 
are allowed. 
Since gauge-invariant ideals of graph $C^*$-algebras and the
corresponding quotients are
naturally isomorphic to graph $C^*$-algebras, this allows one to
compute the $K_0$-groups and $K_1$-groups in the above exact sequence.
Moreover, since the $C^*$-algebra of a graph satisfying Condition~(K)
has real rank zero \cite[Theorem~3.5]{jaj:rrcag}, it follows from
\cite{lgbgkp:crrz} that the descending connecting map
$\partial_0 : K_0(\AAA/\III) \to K_1(\III)$ is the zero map.  All that remains is to describe a method for computing the other connecting group homomorphisms.

The purpose of this paper is to provide explicit formulae for computing the six-term exact sequence, the main challenge being to compute the connecting map $\partial_1 : K_1(\AAA/\III) \to
K_0(\III)$.
We shall also show that $\partial_0 : K_0(\AAA/\III) \to K_1(\III)$ is the
zero map regardless of whether the graph $E$ satisfies Condition~(K)
or not. All our calculations hold for an arbitrary graph algebra $\AAA$ and an arbitrary
gauge-invariant ideal $\III$ in $\AAA$, even in the case of so-called \emph{breaking vertices}. 

To compute $\partial_1$, we need to choose generators for the $K$-groups involved.
There is a canonical (and well-known) way to do this in $K_0$; one can
choose an isomorphism of $K_0(C^*(E))$ with $\coker (A^t-I)$ taking
$[p_v]$ to $\ee_v+\im (A^t-I)$, where $\ee_v$ is the vector with a $1$ in
the $v$\textsuperscript{th} position and zeroes elsewhere.  However,
for the $K_1$-group the calculation is substantially harder. 
Descriptions of $K_1$ can be found in \cite{tbdpirws:crg} and
\cite{ddmt:ckegc}, but we need a more explicit description and
therefore choose a different approach, choosing
explicit generators for $K_1$ based on a slightly intricate indexing of the
entries in a matrix over $C^*(E)$.  
Although any quotient of a graph $C^*$-algebra by a gauge-invariant
ideal is isomorphic to a graph $C^*$-algebra, it will be more
convenient for us to use that such a quotient is isomorphic to a
relative graph $C^*$-algebra (cf. \cite{psmmt:atc}), and we will
therefore find generators of $K_0$ and $K_1$, not just
for graph $C^*$-algebras, but for relative graph $C^*$-algebras.

We prove that the generators we choose for $K_1$ are indeed generators
by computing the index map of the canonical Toeplitz extension
of $C^*(E)$, using methods developed by Katsura in that framework. 
Our approach involves computing the index map using the canonical method
(cf.~\cite{mrflnl:ikc}) of lifting the generating unitaries to partial
isometries and computing defects.  This method has similarities with the
approach for Cuntz-Krieger algebras outlined by Cuntz himself
in \cite{jc:hgsec}, and discussed with a few more details in
\cite{mr:ccka}.
After describing how to choose generators for $K_0$ and $K_1$ of
any relative graph $C^*$-algebra, we determine the index map $\partial_1 :K_1(\AAA/\III)\to
K_0(\III)$ by, in a new extension, again lifting our generating
unitaries to partial isometries, and computing defects.

In Section 2 we briefly introduce graph $C^*$-algebras, relative graph
$C^*$-algebras, and gauge-invariant ideals of graph $C^*$-algebras. In
Section 3 we find generators of $K_0$ and $K_1$ of any relative graph
$C^*$-algebra. Section 4 states the main result of the paper, allowing the computation of the index map $\partial_1 :K_1(\AAA/\III)\to
K_0(\III)$ and the other maps in the six-term exact sequence \eqref{eq:1}, and this result is proved in Section 5.

\section{Preliminaries}

A
(directed) graph $E=(E^0, E^1, r, s)$ consists of a countable set
$E^0$ of vertices, a countable set $E^1$ of edges, and maps $r,s: E^1
\rightarrow E^0$ identifying the range and source of each edge.  A
vertex $v \in E^0$ is called a \emph{sink} if $|s^{-1}(v)|=0$, and $v$
is called an \emph{infinite emitter} if $|s^{-1}(v)|=\infty$. A graph
$E$ is said to be \emph{row-finite} if it has no infinite emitters. If
$v$ is either a sink or an infinite emitter, then we call $v$ a
\emph{singular vertex}.  We write $E^0_\textnormal{sing}$ for the set
of singular vertices.  Vertices that are not singular vertices are
called \emph{regular vertices} and we write $E^0_\textnormal{reg}$ for
the set of regular vertices.

If $E$ is a graph, a \emph{Cuntz-Krieger $E$-family} is a set of
mutually orthogonal projections $\{p_v : v \in E^0\}$ and a set of
partial isometries $\{s_e : e \in E^1\}$ with mutually orthogonal ranges which
satisfy the \emph{Cuntz-Krieger relations}:
\begin{enumerate}[leftmargin=*,widest=(CK2)]
\item[(CK1)] $s_e^* s_e = p_{r(e)}$ for every $e \in E^1$;
\item[(CK2)] $p_v = \sum_{s(e)=v} s_e s_e^*$ for every $v \in E^0_\textnormal{reg}$;
\item[(CK3)] $s_e s_e^* \leq p_{s(e)}$ for every $e \in E^1$.
\end{enumerate} The \emph{graph algebra $C^*(E)$} is defined to be the
$C^*$-algebra generated by a universal Cuntz-Krieger $E$-family.

It will in this paper also be relevant to work with \emph{relative
  graph $C^*$-algebras} introduced
in \cite{psmmt:atc}. To define a relative graph $C^*$-algebra we must, in
addition to a graph $E$, specify a subset $\rV$ of
$E^0_\textnormal{reg}$. A \emph{Cuntz-Krieger $(E,\rV)$-family} is then
a set of
mutually orthogonal projections $\{p_v : v \in E^0\}$ and a set of
partial isometries $\{s_e : e \in E^1\}$ with mutually orthogonal ranges which
satisfy the relations (CK1) and (CK3) above together with the
following relative Cuntz-Krieger relation:
\begin{enumerate}[leftmargin=*,widest=(RCK2)]
\item[(RCK2)] $p_v = \sum_{s(e)=v} s_e s_e^*$ for every $v \in \rV$.
\end{enumerate} 
The \emph{relative graph algebra $C^*(E,\rV)$} is defined to be the
$C^*$-algebra generated by a universal Cuntz-Krieger $(E,\rV)$-family.
If $\rV=E^0_\textnormal{reg}$, then a Cuntz-Krieger $(E,\rV)$-family is
the same as a Cuntz-Krieger $E$-family and $C^*(E,\rV)=C^*(E)$. If
$\rV=\emptyset$, then $C^*(E,\rV)$ is the Toeplitz algebra
$\mathcal{T}(E)$ defined in
\cite[Theorem 4.1]{njfir:thb}. We will call a Cuntz-Krieger
$(E,\emptyset)$-family a \emph{Toeplitz-Cuntz-Krieger
  $E$-family}.

A \emph{path} in $E$ is a sequence of edges $\alpha = \alpha_1
\alpha_2 \ldots \alpha_n$ with $r(\alpha_i) = s(\alpha_{i+1})$ for $1
\leq i < n$, and we say that $\alpha$ has length $|\alpha| = n$.  We
let $E^n$ denote the set of all paths of length $n$, and we let $E^*
:= \bigcup_{n=0}^\infty E^n$ denote the set of finite paths in $E$.
Note that vertices are considered paths of length zero.  The maps
$r,s$ extend to $E^*$, and for $v,w \in E^0$ we write $v \geq w$ if
there exists a path $\alpha \in E^*$ with $s(\alpha)=v$ and $r(\alpha)
= w$.  Also for a path $\alpha := \alpha_1 \ldots \alpha_n$ we define
$s_\alpha := s_{\alpha_1} \ldots s_{\alpha_n}$, and for a vertex $v
\in E^0$ we let $s_v := p_v$.  It is a consequence of the
relations (CK1) and (CK3) that $C^*(E,\rV) = \overline{\textrm{span}} \{
s_\alpha s_\beta^* : \alpha, \beta \in E^* \text{ and } r(\alpha) =
r(\beta)\}$.

We say that a path $\alpha := \alpha_1 \ldots \alpha_n$ of length $1$
or greater is a \emph{cycle} if $r(\alpha)=s(\alpha)$, and we call the
vertex $s(\alpha)=r(\alpha)$ the \emph{base point} of the cycle.  A
cycle is said to be \emph{simple} if $s(\alpha_i) \neq s(\alpha_1)$
for all $1 < i \leq n$.  The following is an important condition in
the theory of graph $C^*$-algebras.

$\text{ }$

\noindent \textbf{Condition~(K)}: No vertex in $E$ is the base point
of exactly one simple cycle; that is, every vertex is either the base
point of no cycles or at least two simple cycles.

$\text{ }$

For any graph $E$ a subset $H \subseteq E^0$ is \emph{hereditary} if
whenever $v, w \in E^0$ with $v \in H$ and $v \geq w$, then $w \in H$.
A hereditary subset $H$ is \emph{saturated} if whenever $v \in
E^0_\textnormal{reg}$ with $r(s^{-1}(v)) \subseteq H$, then $v \in H$.
For any saturated hereditary subset $H$, the \emph{breaking vertices}
corresponding to $H$ are the elements of the set 
\begin{equation*}
B_H := \bigl\{ v \in E^0: |s^{-1}(v)| = \infty \text{ and } 0 < |s^{-1}(v) \cap r^{-1}(E^0
\setminus H)| < \infty \bigr\}.
\end{equation*}
An \emph{admissible pair} $(H, S)$
consists of a saturated hereditary subset $H$ and a subset $S
\subseteq B_H$.  For a fixed graph $E$ we order the collection of
admissible pairs for $E$ by defining $(H,S) \leq (H',S')$ if and only
if $H \subseteq H'$ and $S \subseteq H' \cup S'$. For any admissible
pair $(H,S)$ we define
$
\III_{(H,S)}$ to be  the ideal in $C^*(E)$ generated by 
\begin{equation*}
\{p_v : v \in H\} \cup \{p_{v_0}^H : v_0 \in S\},
\end{equation*}
where $p_{v_0}^H$ is the \emph{gap projection} defined by
\begin{equation*}
p_{v_0}^H := p_{v_0} - \sum_{\substack{s(e) = v_0\\r(e) \notin H}} s_e
s_e^*.
\end{equation*}
Note that the definition of $B_H$ ensures that the sum on the right is
finite.

For any graph $E$ there is a canonical gauge action $\gamma :
\mathbb{T} \to \operatorname{Aut} C^*(E)$ with the property that for
any $z \in \mathbb{T}$ we have $\gamma_z (p_v) = p_v$ for all $v \in
E^0$ and $\gamma_z(s_e) = zs_e$ for all $e \in E^1$.  We say that an
ideal $\III \triangleleft C^*(E)$ is \emph{gauge invariant} if
$\gamma_z(\III) \subseteq \III$ for all $z \in \mathbb{T}$.

There is a bijective correspondence between the lattice of admissible
pairs of $E$ and the lattice of gauge-invariant ideals of $C^*(E)$
given by $(H, S) \mapsto \III_{(H,S)}$
\cite[Theorem~3.6]{tbdpirws:crg}. When $E$ satisfies Condition~(K), all
ideals of $C^*(E)$ are gauge invariant \cite[Corollary~3.8]{tbdpirws:crg}
and the map $(H, S) \mapsto \III_{(H,S)}$ is onto the lattice of ideals
of $C^*(E)$.  When $B_H = \emptyset$, we write $\III_H$ in place of
$\III_{(H, \emptyset)}$ and observe that $\III_H$ equals the ideal generated
by $\{ p_v : v \in H \}$.  Note that if $E$ is row-finite, then $B_H$
is empty for every saturated hereditary subset $H$.

\section{$K$-theory for relative graph algebras}
For a graph $E$, the \emph{adjacency matrix} is the $E^0 \times E^0$ matrix $A_E$ with 
\begin{equation*}
A_E(v,w) := \#\bigl\{ e
\in E^1 : s(e)= v \text{ and } r(e)=w \bigr\}.
\end{equation*}
Note that the entries of
$A_E$ are elements of $\{0, 1, 2, \ldots \} \cup \{ \infty \}$.
Writing the adjacency matrix with respect to the decomposition $E^0 =
E^0_\textnormal{reg} \sqcup E^0_\textnormal{sing}$, where the regular
vertices are listed first, we obtain a (possibly infinite) block
matrix 
\begin{equation*}
A_E = \begin{bmatrix}A&\alpha\\ H&\eta
\end{bmatrix}
\end{equation*}
in which all entries of $A$ and $\alpha$ are finite, but the entries
in $H$ and $\eta$ may be infinite. We will often just substitute
``$*$'' for $H$ and $\eta$, as they turn out to be irrelevant for the
$K$-theory. Indeed, by \cite{tbdpirws:crg} and \cite{ddmt:ckegc} we know that the map
\begin{equation*}
\begin{bmatrix}A^t-I\\\alpha^t
\end{bmatrix}:\ZZ^{E^0_\textnormal{reg} }\to \ZZ^{E^0}
\end{equation*}
contains the needed information, as
\begin{equation*}
K_0\bigl(C^*(E)\bigr)\simeq\coker \begin{bmatrix}A^t-I\\\alpha^t
\end{bmatrix}
\qquad
K_1\bigl(C^*(E)\bigr)\simeq\ker \begin{bmatrix}A^t-I\\\alpha^t\end{bmatrix}.
\end{equation*}
This result can be generalized to relative graph $C^*$-algebras.  In fact, we
prove in Proposition~\ref{prop:k-groups} that if $E$ is a graph,
$\rV\subseteq E_\textnormal{reg}$, 
and
$
A_E = \begin{lbmatrix}A&\alpha\\ H&\eta
\end{lbmatrix}
$
is the adjacency matrix of $E$ written with respect to the decomposition
$E^0=\rV\sqcup (E^0\setminus \rV)$, where the vertices belonging to $\rV$ are
listed first, then there exists a group isomorphism 
$\chi_0:\coker \begin{lbmatrix}A^t-I\\\alpha^t\end{lbmatrix}\to
K_0\bigl(C^*(E,\rV)\bigr)$ given for any $v\in \rV$ by 
\begin{equation*}
\chi_0\left(\ee_v+\operatorname{im} \begin{lbmatrix}A^t-I\\\alpha^t\end{lbmatrix}\right)= [p_v]_0,
\end{equation*}
and we construct a similar group
isomorphism $\chi_1$ between
$\ker \begin{lbmatrix}A^t-I\\\alpha^t\end{lbmatrix}$ and $K_1\bigl(C^*(E,\rV)\bigr)$.
For this we first introduce some notation:

Given $\xx\in\ker \begin{lbmatrix}A^t-I\\\alpha^t\end{lbmatrix}$,
first note that by definition $\xx$ has only finitely many nonzero
entries $x_{v_1},\dots x_{v_k}$. We define 
\begin{align*}
\upindex &:= \bigl\{ (e,i) : e \in E^1, 1 \leq i \leq -x_{s(e)} \} 
\cup \{ (v,i) : v \in E^0, 1 \leq i \leq x_v \bigr\}\\
\downindex &:= \bigl\{ (e,i) : e \in E^1, 1 \leq i \leq x_{s(e)} \}
\cup \{ (v,i) : v \in E^0, 1 \leq i \leq -x_v \bigr\}
\end{align*}
and note, using the convention that $r(v)=v$ for any $v\in E^0$, that
\begin{lemma}\label{matchup}
When $\xx\in \ker  \begin{lbmatrix}A^t-I\\\alpha^t\end{lbmatrix}$, then  
for any vertex $v\in E^0$ the sets
\begin{align*}
\upindex[v]&=\bigl\{(x,i)\in \upindex: r(x)=v\bigr\}
\shortintertext{and} 
\downindex[v]&=\bigl\{(x,i)\in \downindex: r(x)=v\bigr\}
\end{align*}
are finite and have the same number of elements. 
\end{lemma}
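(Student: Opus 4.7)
My plan is to translate both $|\upindex[v]|$ and $|\downindex[v]|$ into explicit sums in terms of the entries of $\xx$ and the vertex matrix $A_E$, and then to read off finiteness from the regularity of vertices in $\rV$ and equality directly from the kernel hypothesis. The lemma is really a bookkeeping identity, so the main task is setting up clean expressions and then extending $\xx$ by zero on $E^0\setminus \rV$ so that the two cases of the kernel condition can be handled uniformly.

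First I would separate each set into its edge and vertex pieces. Using the convention $r(v)=v$, the vertex contribution to $\upindex[v]$ is exactly $\max(0,x_v)$, while the edge contribution, after grouping edges with $r(e)=v$ by their source, equals $\sum_{w\in E^0} A_E(w,v)\max(0,-x_w)$. The analogous analysis for $\downindex[v]$ gives
\begin{align*}
|\upindex[v]|&=\sum_{w\in E^0}A_E(w,v)\max(0,-x_w)+\max(0,x_v),\\
|\downindex[v]|&=\sum_{w\in E^0}A_E(w,v)\max(0,x_w)+\max(0,-x_v).
\end{align*}
For finiteness, I would observe that only indices $w$ in the finite support $\{v_1,\dots,v_k\}$ of $\xx$ contribute nontrivially, and each such $w$ lies in $\rV\subseteq E^0_\textnormal{reg}$, so $A_E(w,v)\le |s^{-1}(w)|<\infty$. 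Hence both expressions are finite sums of finite nonnegative integers.

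For the equality, subtract the two formulas and use the identity $\max(0,-t)-\max(0,t)=-t$ to obtain
\begin{equation*}
|\upindex[v]|-|\downindex[v]|=x_v-\sum_{w\in E^0}A_E(w,v)\,x_w.
\end{equation*}
If $v\in\rV$, the right-hand side is the negative of the $v$-th coordinate of $(A^t-I)\xx$ in the block matrix, which vanishes by the kernel hypothesis; if $v\in E^0\setminus\rV$, then $x_v=0$ and the sum is the $v$-th coordinate of $\alpha^t\xx$, which again vanishes. Either way the difference is zero, completing the proof. The only subtle point—one that needs a line of explicit checking—is that the vertex term $\max(0,x_v)-\max(0,-x_v)=x_v$ fits exactly with the $-I$ in the top block of the matrix, so that the two distinct cases of the kernel equation collapse into a single coordinate computation.
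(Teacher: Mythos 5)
Your proof is correct and follows essentially the same route as the paper: count the vertex and edge contributions to each set, express the counts via the entries of $A_E$, and read off equality from the $v$-th coordinate of the kernel equation (the $A^t-I$ block for $v\in \rV$, the $\alpha^t$ block otherwise), with finiteness coming from the finite support of $\xx$ and the regularity of the vertices in $\rV$. The only difference is cosmetic: your $\max(0,\pm x_w)$ bookkeeping merges the paper's three-case analysis into a single computation.
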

\begin{proof}
We need to consider three cases separately.

\noindent{\textsc{Case I:} $v\in \rV$ and $x_v\geq 0$.}

\noindent The number of elements in $\upindex[v]$ is
\begin{equation*}
x_v+\sum_{x_w<0}
\# \bigl\{ e \in E^1 : s(e) = w, r(e) = v \bigr\} \cdot (-x_w) \ = \ x_v-\sum_{x_w<0}A^t_{v,w} \ x_w
\end{equation*}
and the number of elements in $\downindex[v]$
is
\begin{equation*}
\sum_{x_w>0}  \# \bigl\{ e \in E^1 : s(e) = w, r(e) = v \bigr\} \cdot x_w \ = \ \sum_{x_w>0}A^t_{v,w} \ x_w
\end{equation*}
so the claim follows by inspecting the $v$ coordinate of the equality $A^t\xx=\xx$.

\noindent{\textsc{Case II:} $v\in \rV \text{ and } x_v<0$.}\\
As above.

\noindent{\textsc{Case III:} $v\in E^0\setminus \rV$.}\\
The number of elements in $\upindex[v]$ is
\begin{equation*}
\sum_{x_w<0}  \# \bigl\{ e \in E^1 : s(e) = w, r(e) = v \bigr\} \cdot (-x_w) \ = \ -\sum_{x_w<0}\alpha^t_{v,w} \ x_w
\end{equation*}
and the number of elements in $\downindex[v]$
is
\begin{equation*}
\sum_{x_w>0} \# \bigl\{ e \in E^1 : s(e) = w, r(e) = v \bigr\} \cdot x_w \ = \ \sum_{x_w>0}\alpha^t_{v,w} \ x_w
\end{equation*}
so the claim follows by inspecting the $v$ coordinate of the equality $\alpha^t\xx=0$.
\end{proof}

\begin{lemma} \label{equal}
$\upindex$ and $\downindex$ are finite sets, and have the same number of elements.
\end{lemma}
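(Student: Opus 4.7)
The plan is to decompose $\upindex$ and $\downindex$ by the range map and reduce to the per-vertex equality of Lemma~\ref{matchup}. Using the convention $r(v)=v$ for $v\in E^0$, every element of $\upindex$ or $\downindex$ has a well-defined range in $E^0$, giving the disjoint decompositions
\begin{equation*}
\upindex = \bigsqcup_{v\in E^0} \upindex[v], \qquad \downindex = \bigsqcup_{v\in E^0} \downindex[v].
\end{equation*}
Lemma~\ref{matchup} already yields $|\upindex[v]|=|\downindex[v]|<\infty$ for every $v$, so once I establish that $\upindex$ and $\downindex$ are themselves finite, the equality $|\upindex|=|\downindex|$ will follow by summing over the finitely many nonempty $v$-summands.

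The main step is therefore to verify finiteness of $\upindex$; the argument for $\downindex$ is identical. Since $\xx$ lies in the direct sum $\ZZ^{\rV}$, its support $\{v_1,\dots,v_k\}$ is a finite subset of $\rV$. The vertex contributions $(v,i)$ with $1\leq i\leq x_v$ force $x_v\geq 1$, hence $v\in\{v_1,\dots,v_k\}$, producing at most $\sum_{x_v>0}x_v$ pairs. The potentially delicate part is the edge contributions $(e,i)$ with $1\leq i\leq -x_{s(e)}$: these force $s(e)\in\{v_1,\dots,v_k\}\subseteq\rV$, but since every vertex of $\rV$ is regular, $|s^{-1}(s(e))|$ is finite, so the total number of such pairs is bounded by the finite sum $\sum_{x_w<0}|s^{-1}(w)|\cdot(-x_w)$.

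With finiteness secured, the proof concludes by summing: only finitely many summands in each decomposition are nonempty, and Lemma~\ref{matchup} equates corresponding cardinalities, hence $|\upindex|=|\downindex|$. The only real subtlety — and essentially the only content beyond invoking Lemma~\ref{matchup} — is noting that regularity of the source vertices, guaranteed by $\xx$ being supported in $\rV\subseteq E^0_\textnormal{reg}$, is what prevents an infinite emitter from producing infinitely many edge pairs.
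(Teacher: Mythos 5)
Your proof is correct and takes essentially the same route as the paper's: decompose $\upindex$ and $\downindex$ by the range map, apply Lemma~\ref{matchup} fiberwise, and obtain finiteness from the fact that $\xx$ has finite support contained in $\rV\subseteq E^0_\textnormal{reg}$, so no infinite emitter can contribute infinitely many edge pairs. The only cosmetic difference is that you bound $|\upindex|$ by a direct count while the paper instead observes that only finitely many fibers $\upindex[v]$ are nonempty; the underlying facts used are identical.
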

\begin{proof}
This follows from Lemma~\ref{matchup}, as indeed $\upindex[v]\not=\emptyset$ only when $v$ lies in the set
\begin{equation*}
\{v : x_v\not=0\}\cup \bigl\{v : x_w \neq 0 \text{ for some } w \in s(r^{-1}(v)) \bigr\}
\end{equation*}
which is finite since no $w$ is an infinite emitter.
\end{proof}

Denote the common number of elements in $\upindex$ and $\downindex$ by $\hhh$. Because of Lemma~\ref{matchup}, we can define bijections
\begin{equation*}
\upm{\cdot}:\upindex\to \{1,\dots,\hhh\}\qquad
\downm{\cdot}:\downindex\to \{1,\dots,\hhh\}
\end{equation*}
with the property that
\begin{equation}\label{goodchoice}
\upm{x,i}=\downm{y,j}\Longrightarrow r(x)=r(y)
\end{equation}
with the convention $r(v)=v$. 

When $\mathfrak{A}$ is a $C^*$-algebra then we let
$\matrM_{\hhh}(\mathfrak{A})$ denote the $C^*$-algebra of
$\hhh\times\hhh$-matrices over $\mathfrak{A}$.
We are ready for our key definitions:

\begin{definition}\label{key}
Suppose that $\mathfrak{A}$ is a $C^*$-algebra which contains  a
Toeplitz-Cuntz-Krieger $E$-family $\{p_v : v \in
E^0\}\cup\{s_e : e \in E^1\}$.
With notation as above, we define the two elements $V,P\in \matrM_{\hhh}(\mathfrak{A})$ by
\begin{eqnarray*}
V&=&\sum_{\substack{ 1\leq i\leq x_w\\  s(e)=w}}
s_e \, \EE{\upm{w,i}}{\downm{e,i}}\ +
\sum_{\substack{ 1\leq i\leq -x_w\\   s(e) = w
  }} s_e^* \, \EE{\upm{e,i}}{\downm{w,i}}
\shortintertext{and}
P&=&\sum_{1\leq i\leq x_w} p_w\EE{\upm{w,i}}{\upm{w,i}}\ +
\sum_{\substack{ 1\leq i\leq -x_w\\ \mathclap{  s(e) = w, r(e) = v}
}}
p_v\EE{\upm{e,i}}{\upm{e,i}}.
\end{eqnarray*}
Here $\EE{\bullet}{\bullet}$ denote the standard matrix units in
$\matrM_\hhh(M(\mathfrak{A}))$ where $M(\mathfrak{A})$ is the
multiplier algebra of $\mathfrak{A}$.
\end{definition}

\begin{lemma}\label{foureqs}
If $\{ s_e, p_v : e \in E^1, v \in E^0\}$ is a Toeplitz-Cuntz-Krieger $E$-family, then
\begin{eqnarray}\label{myp}
P&=&\sum_{1\leq i\leq -x_w} p_w\EE{\downm{w,i}}{\downm{w,i}}\ +
\sum_{\substack{ 1\leq i\leq x_w\\\mathclap{ s(e) = w, r(e) = v}}}
p_v\EE{\downm{e,i}}{\downm{e,i}}, \\\label{myvstar}
V^*&=&\sum_{\substack{ 1\leq i\leq x_w\\  s(e) = w }} s_e^* \EE{\downm{e,i}}{\upm{w,i}}\ +
\sum_{\substack{ 1\leq i\leq -x_w\\   s(e) = w }} s_e \EE{\downm{w,i}}{\upm{e,i}},\\\label{myvvstar}
VV^*&=&\sum_{\substack{ 1\leq i\leq x_w\\   s(e) = w }}
s_es_e^*\EE{\upm{w,i}}{\upm{w,i}}\ +
\sum_{\substack{ 1\leq i\leq -x_w\\\mathclap{s(e) = w, r(e) = v }}}
  p_{v}\EE{\upm{e,i}}{\upm{e,i}},\\\label{myvstarv}
V^*V&=&\sum_{\substack{ 1\leq i\leq -x_w\\   s(e) = w }}
s_es_e^*\EE{\downm{w,i}}{\downm{w,i}}\ +
\sum_{\substack{ 1\leq i\leq x_w\\\mathclap{s(e) = w, r(e) = v }}}
 p_{v}\EE{\downm{e,i}}{\downm{e,i}}.
\end{eqnarray}
\end{lemma}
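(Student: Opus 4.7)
The strategy is to verify each of the four equations by direct calculation from Definition~\ref{key}, using only the Toeplitz-Cuntz-Krieger relation (CK1) $s_e^*s_e=p_{r(e)}$ together with the mutual orthogonality of the range projections $\{s_es_e^*:e\in E^1\}$. The latter implies $s_e^*s_{e'}=s_e^*(s_es_e^*)(s_{e'}s_{e'}^*)s_{e'}=0$ whenever $e\neq e'$, and this is the only nontrivial algebraic identity needed. Equation \eqref{myvstar} is immediate: it is just the term-by-term adjoint of the defining formula for $V$, obtained from $\bigl(s_e\EE{a}{b}\bigr)^*=s_e^*\EE{b}{a}$.

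For \eqref{myp}, observe that both the defining expression for $P$ and the right-hand side of \eqref{myp} are diagonal matrices in $\matrM_{\hhh}(\mathfrak{A})$. The $(k,k)$-entry of the original expression, for $k=\upm{x,i}\in\{1,\dots,\hhh\}$, equals $p_{r(x)}$ (using the convention $r(v)=v$), while the $(k,k)$-entry of the right-hand side equals $p_{r(y)}$ for $k=\downm{y,j}$. Property \eqref{goodchoice} then forces $r(x)=r(y)$, so the two diagonal matrices agree entry by entry.

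For \eqref{myvvstar}, write $V=V_A+V_B$ for the two sums appearing in Definition~\ref{key}, so that $VV^*=V_AV_A^*+V_AV_B^*+V_BV_A^*+V_BV_B^*$. The cross terms vanish: in both $V_AV_B^*$ and $V_BV_A^*$ the inner matrix-unit product pairs an edge-indexed element of $\downindex$ with a vertex-indexed one, so these are distinct elements of $\downindex$ and the bijection $\downm{\cdot}$ sends them to distinct integers. The term $V_AV_A^*$ forces $(e,i)=(e',i')$ in the inner product, yielding the first sum of \eqref{myvvstar} with coefficient $s_es_e^*$. The term $V_BV_B^*$ forces $(w,i)=(w',i')$, after which $s_e^*s_{e'}$ collapses to $p_{r(e)}$ when $e=e'$ and to $0$ otherwise, producing the second sum of \eqref{myvvstar}. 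Equation \eqref{myvstarv} is obtained by the completely symmetric computation with $\upm{\cdot}$ and $\downm{\cdot}$ interchanged throughout.

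The main obstacle is bookkeeping rather than algebra: one must track the partition of $\upindex$ and $\downindex$ into edge-indexed and vertex-indexed parts, and exploit the fact that these two parts are genuinely disjoint subsets. It is this disjointness that causes all cross terms in the expansions of $VV^*$ and $V^*V$ to vanish, and it is the range-matching property \eqref{goodchoice} that makes the two diagonal expressions for $P$ coincide term by term.
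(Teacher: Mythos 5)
Your proof is correct and follows essentially the same route as the paper's: equation \eqref{myvstar} by taking adjoints, equation \eqref{myp} by identifying both sides with the diagonal sums over $\upindex$ and $\downindex$ and invoking \eqref{goodchoice}, and equations \eqref{myvvstar}--\eqref{myvstarv} by expanding via the matrix-unit relations and then applying (CK1) together with the mutual orthogonality of the ranges. Your explicit decomposition $V=V_A+V_B$ and the observation that the cross terms vanish because the edge-indexed and vertex-indexed parts of $\downindex$ (resp.\ $\upindex$) are disjoint merely spells out what the paper leaves implicit.
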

\begin{proof}
It follows from Lemma \ref{equal} and Equation \eqref{goodchoice} that 
\begin{equation*}
\sum_{(x,i)\in \upindex}p_{r(x)}\EE{\upm{x,i}}{\upm{x,i}}\
=\sum_{(x,i)\in \downindex}p_{r(x)}\EE{\downm{x,i}}{\downm{x,i}},
\end{equation*}
and it is easy to check that 
\begin{equation*}
P\ =\sum_{(x,i)\in\upindex}p_{r(x)}\EE{\upm{x,i}}{\upm{x,i}}
\end{equation*} 
and that 
\begin{equation*}
\sum_{(x,i)\in \downindex}p_{r(x)}\EE{\downm{x,i}}{\downm{x,i}}\ =
\sum_{1\leq i\leq -x_w} p_w\EE{\downm{w,i}}{\downm{w,i}}\ +
\sum_{\substack{ 1\leq i\leq x_w\\\mathclap{s(e) = w, r(e) = v}}}
p_v\EE{\downm{e,i}}{\downm{e,i}} 
\end{equation*}
from which Equation \eqref{myp} then follows.
Equation \eqref{myvstar} is straightforward to check. For Equation \eqref{myvvstar},
using only \eqref{myvstar} and the matrix unit relations we get that 
\begin{equation*}
VV^*\ =\sum_{\substack{ 1\leq i\leq x_w\\ s(e) = w }}
s_es_e^*\EE{\upm{w,i}}{\upm{w,i}}\ +
\sum_{\substack{ 1\leq i\leq -x_w\\\mathclap{\wev} \\\mathclap{\lwevprime}}}
   s^*_es_{e'}\EE{\upm{e,i}}{\upm{e',i}}
\end{equation*}
and \eqref{myvvstar} holds from (CK1) and the fact that the $s_e$'s
have mutually or\-tho\-go\-nal ranges. The computation for $V^*V$ is
similar. 
\end{proof}

\begin{lemma} \label{V-pi-lem}
If $\{ s_e, p_v : e \in E^1, v \in E^0\}$ is a Toeplitz-Cuntz-Krieger $E$-family, then $V$ is a partial isometry with $PV=VP=V$.
\end{lemma}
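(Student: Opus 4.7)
The plan is to verify the three assertions separately, in each case unpacking the matrix expressions using the bijectivity of $\upm{\cdot}$ and $\downm{\cdot}$ together with the Cuntz-Krieger relations.

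First I would show $V$ is a partial isometry by verifying that $VV^*$ is a projection; by standard $C^*$-algebra theory this is equivalent to $V$ being a partial isometry. Using formula \eqref{myvvstar} from Lemma~\ref{foureqs}, $VV^*$ is a sum of elements of the form $q \cdot \EE{\upm{x,i}}{\upm{x,i}}$ where $q$ is either $s_es_e^*$ or $p_v$, hence a projection in $\mathfrak A$. Since $\upm{\cdot}$ is a bijection, the distinct matrix units $\EE{\upm{x,i}}{\upm{x,i}}$ appearing in the sum are mutually orthogonal rank-one projections in $\matrM_\hhh$, so the summands are mutually orthogonal projections in $\matrM_\hhh(M(\mathfrak A))$. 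Therefore $VV^*$ is a projection.

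For $PV = V$, I would expand the product using the definition of $P$ and $V$ together with the matrix-unit identity $\EE{a}{b}\EE{c}{d}=\delta_{b,c}\EE{a}{d}$. Crucially, since the two summands defining $\upindex$ have first coordinates of disjoint types (edges versus vertices) and $\upm{\cdot}$ is a bijection, the only surviving cross terms are those in which both matrix-unit middle indices are of the same type. The vertex-type match produces $p_w s_e$ with $s(e)=w$, and the edge-type match produces $p_{r(e)} s_e^*$; the Cuntz-Krieger relations $p_{s(e)}s_e = s_e$ and $p_{r(e)}s_e^* = s_e^*$ (both consequences of (CK1) and (CK3)) then collapse these to $s_e$ and $s_e^*$, reassembling $V$ from Definition~\ref{key}. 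For $VP = V$ the computation is symmetric, except that here I would use the alternative expression for $P$ from Equation~\eqref{myp} of Lemma~\ref{foureqs}, so that the column indices of $V$ (which are of the form $\downm{\cdot,\cdot}$) line up with the row indices of $P$; the analogous type matching, combined with $s_e p_{r(e)} = s_e$ and $s_e^* p_{s(e)} = s_e^*$, recovers $V$.

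The content of the proof is almost entirely bookkeeping, so the main obstacle is organizational rather than conceptual: one needs to make clear that in multiplying $P \cdot V$ or $V \cdot P$ the four cross terms split into two vanishing ones (edge-vertex type mismatches under the bijections) and two nonvanishing ones in which the Cuntz-Krieger relations reduce the projection prefactors to identities on the appropriate partial isometries. Once this typing observation is isolated, the verification is essentially a one-line match between each summand of $V$ and the corresponding summand it produces.
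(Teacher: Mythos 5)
Your proof is correct and follows essentially the same route as the paper: both rest on Lemma~\ref{foureqs}, the matrix-unit bookkeeping, and the relations $p_{s(e)}s_e=s_e$, $s_ep_{r(e)}=s_e$ derived from (CK1) and (CK3) --- the only cosmetic difference is that the paper certifies the partial isometry by checking $VV^*V=V$ from \eqref{myvstarv}, whereas you check that $VV^*$ is a projection from \eqref{myvvstar}. One small point to tighten: in \eqref{myvvstar} two summands $s_es_e^*\,\EE{\upm{w,i}}{\upm{w,i}}$ and $s_{e'}s_{e'}^*\,\EE{\upm{w,i}}{\upm{w,i}}$ with $e\neq e'$ and $s(e)=s(e')=w$ carry the \emph{same} matrix unit, so their orthogonality does not follow from the orthogonality of distinct matrix units but from the mutual orthogonality of the ranges of the $s_e$'s, which is part of the definition of a Toeplitz--Cuntz--Krieger family and should be invoked explicitly at that step.
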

\begin{proof}
Using Equation \eqref{myvstarv}, the definition of $V$, and the fact
that the $s_e$'s are partial isometries, we see that $VV^*V = V$, so
that $V$ is a partial isometry.  Furthermore, (CK3) implies $PV=V$ and
$VP=V$ by Equation \eqref{myp}.
\end{proof}

We now let $\{ s_e, p_v : e \in E^1, v \in E^0\}$ be the universal
Cuntz-Krieger $(E,\rV)$-family generating $C^*(E,\rV)$ and 
write $\VV$ and $\PP$ for the corresponding elements $V$ and $P$ in
$\matrM_\hhh\bigl(C^*(E,\rV)\bigr)$ defined in Definition \ref{key}, using the
added subscript to emphasize the dependence of each of $V$ and $P$ on
$\xx\in\ker \begin{lbmatrix}A^t-1\\\alpha^t\end{lbmatrix}$.  In
addition, we define $\UU :=\VV+(1-\PP)$. 

\begin{fact}\label{ispi}
We have that $\VV\VV^*=\VV^*\VV=\PP$,
and hence that $\UU$ is a unitary.
\end{fact}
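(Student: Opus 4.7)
The plan is to upgrade Lemmas~\ref{foureqs} and \ref{V-pi-lem} from general Toeplitz-Cuntz-Krieger families to the universal Cuntz-Krieger $(E,\rV)$-family by exploiting relation (RCK2). The key observation that makes this possible is that $\xx$ lives in $\ker\begin{lbmatrix}A^t-I\\\alpha^t\end{lbmatrix}$, a subgroup of $\ZZ^{\rV}$; consequently $x_w\neq 0$ forces $w\in \rV$, and so (RCK2) gives $\sum_{s(e)=w}s_es_e^*=p_w$ whenever $x_w\neq 0$.

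To prove $\VV\VV^*=\PP$, I would start from Equation~\eqref{myvvstar}. In the first summand every $w$ appearing has $x_w>0$, so $w\in\rV$ and (RCK2) collapses $\sum_{s(e)=w}s_es_e^*$ to $p_w$; this turns the first summand into $\sum_{1\leq i\leq x_w}p_w\EE{\upm{w,i}}{\upm{w,i}}$, which matches the first summand in the definition of $\PP$. The second summands of \eqref{myvvstar} and of the definition of $\PP$ are already literally the same, so $\VV\VV^*=\PP$. The identity $\VV^*\VV=\PP$ is handled identically: from Equation~\eqref{myvstarv} the indices $w$ in the first summand satisfy $-x_w>0$, hence $w\in \rV$, so (RCK2) turns that summand into $\sum_{1\leq i\leq -x_w}p_w\EE{\downm{w,i}}{\downm{w,i}}$, and comparison with the alternative expression \eqref{myp} for $\PP$ finishes the argument.

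Having established $\VV\VV^*=\VV^*\VV=\PP$, I would observe that $\PP$ is automatically a projection (it is the range projection of the partial isometry $\VV$) and that $\PP\VV=\VV\PP=\VV$ is already known from Lemma~\ref{V-pi-lem}. For unitarity of $\UU=\VV+(1-\PP)$, the computation
\begin{equation*}
\UU\UU^*=\VV\VV^*+\VV(1-\PP)+(1-\PP)\VV^*+(1-\PP)^2
\end{equation*}
simplifies because $\VV(1-\PP)=\VV-\VV\PP=0$, $(1-\PP)\VV^*=0$, and $(1-\PP)^2=1-\PP$, giving $\UU\UU^*=\PP+(1-\PP)=1$; the identity $\UU^*\UU=1$ is symmetric.

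I do not expect a real obstacle: once the two ``missing'' Cuntz-Krieger sums in \eqref{myvvstar} and \eqref{myvstarv} are recognized as arising only at vertices in $\rV$, everything is forced. The only point that deserves care is the bookkeeping needed to make sure $\PP$'s alternative presentation \eqref{myp} is matched against $\VV^*\VV$ rather than its original presentation; this is precisely why Equation~\eqref{myp} was proved in Lemma~\ref{foureqs}.
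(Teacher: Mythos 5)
Your proof is correct and follows essentially the same route as the paper: apply (RCK2) to the first summands of Equations~\eqref{myvvstar} and \eqref{myvstarv}, which is legitimate because $x_w\neq 0$ forces $w\in\rV$, and compare with the definition of $\PP$ and its alternative form \eqref{myp}. You spell out the unitarity computation and the reason (RCK2) applies more explicitly than the paper does, but the substance is identical.
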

\begin{proof}
It follows from Equation \eqref{myvvstar} and (RCK2) that
\begin{eqnarray*}
\VV\VV^*&=&
\sum_{1\leq i\leq x_w}\left( \sum_{s(e) = w}
  s_es_e^*\right)\EE{\upm{w,i}}{\upm{w,i}}\ +
\sum_{\substack{ 1\leq i\leq -x_w\\\mathclap{\wev}}}
  p_{v}\EE{\upm{e,i}}{\upm{e,i}}\\
&=&\sum_{1\leq i\leq x_w}p_w\EE{\upm{w,i}}{\upm{w,i}}\ +
\sum_{\substack{ 1\leq i\leq -x_w\\\mathclap{\wev}}}
  p_{v}\EE{\upm{e,i}}{\upm{e,i}} \\
&=& \PP
\end{eqnarray*}
showing the first claim. Likewise, Equation \eqref{myvstarv} and (RCK2) show that $\VV^*\VV=\PP$.  The fact that $\UU$ is a unitary follows.
\end{proof}

\begin{remark}\label{indep}
Notice that although $\UU$ does depend on the choice of bijections
\begin{equation*}
\upm{\cdot}:\upindex\to \{1,\dots,\hhh\}\qquad
\downm{\cdot}:\downindex\to \{1,\dots,\hhh\},
\end{equation*}
the element $[\UU]_1$ of $K_1\bigl(C^*(E,\rV)\bigr)$ does not.
\end{remark}

\begin{prop} \label{prop:k-groups}
Let $E$ be a graph, let $V$ be a subset of $E_\textnormal{reg}$
and let
\begin{equation*}
A_E = \begin{bmatrix}A&\alpha\\ H&\eta
\end{bmatrix}
\end{equation*}
be the adjacency matrix of $E$ written with respect to the decomposition
$E^0=V\sqcup (E^0\setminus V)$ where the vertices belonging to $V$ are
listed first.
\begin{enumerate}
\item There exists a group isomorphism
$\chi_0:\coker \begin{lbmatrix}A^t-I\\\alpha^t\end{lbmatrix}\to
K_0\bigl(C^*(E,\rV)\bigr)$ given for any $v\in E^0$ by 
\begin{equation} \label{eq:2}
\chi_0\left(\ee_v+\operatorname{im} \begin{bmatrix}A^t-I\\\alpha^t\end{bmatrix}\right)= [p_v]_0.
\end{equation}
The preimage of the positive cone of $K_0\bigl(C^*(E,\rV)\bigr)$ is generated by
\begin{equation*}
\Bigl\{\ee_v: v\in E^0\Bigr\}\cup 
\Bigl\{\ee_v-\sum _{e\in F}\ee_{r(e)}: v\in E^0_\textnormal{sing},\ 
F\subseteq s^{-1}(v),\ F\text{ finite}\Bigr\}. 
\end{equation*}
\item The map
  $\chi_1:\ker \begin{lbmatrix}A^t-I\\\alpha^t\end{lbmatrix}\to
  K_1\bigl(C^*(E,\rV)\bigr)$ given by 
\begin{equation*}
\chi_1(\xx)=[\UU]_1
\end{equation*}
is group isomorphism.
\end{enumerate}
\end{prop}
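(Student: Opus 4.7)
The plan is to derive both parts from the cyclic six-term $K$-theory exact sequence of the relative Toeplitz extension
\begin{equation*}
0 \longrightarrow \mathcal{I} \longrightarrow \mathcal{T}(E) \longrightarrow C^*(E,\rV) \longrightarrow 0,
\end{equation*}
where $\mathcal{T}(E) = C^*(E,\emptyset)$ and $\mathcal{I}$ is the ideal of $\mathcal{T}(E)$ generated by $\bigl\{\,P_v - \sum_{s(e)=v} S_eS_e^* : v \in \rV\,\bigr\}$. I would first record the $K$-theory of the outer terms. Standard Toeplitz-algebra computations in the spirit of \cite{njfir:thb} give $K_0(\mathcal{T}(E)) \cong \ZZ^{E^0}$ with generators $[P_v]_0$ and $K_1(\mathcal{T}(E)) = 0$, while $\mathcal{I}$ splits as a direct sum of elementary $C^*$-algebras indexed by $\rV$, yielding $K_0(\mathcal{I}) \cong \ZZ^{\rV}$ generated by the gap-projection classes and $K_1(\mathcal{I}) = 0$. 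A direct check identifies the inclusion-induced map $\ZZ^{\rV} \to \ZZ^{E^0}$ with $\begin{lbmatrix}A^t-I\\\alpha^t\end{lbmatrix}$, so the six-term sequence collapses to
\begin{equation*}
0 \to K_1\bigl(C^*(E,\rV)\bigr) \xrightarrow{\partial} \ZZ^{\rV} \xrightarrow{\begin{lbmatrix}A^t-I\\\alpha^t\end{lbmatrix}} \ZZ^{E^0} \xrightarrow{\pi_*} K_0\bigl(C^*(E,\rV)\bigr) \to 0,
\end{equation*}
with $\pi_*$ sending $\ee_v$ to $[p_v]_0$ for every $v \in E^0$. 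Part~(1) drops out immediately; the positive-cone description follows generator-by-generator from the known monoid structure of $K_0^+$ for graph algebras (vertex projection classes, plus subsums indexed by finite edge subsets at singular vertices).

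For part~(2), $\partial$ already gives an abstract isomorphism $K_1(C^*(E,\rV)) \cong \ker\begin{lbmatrix}A^t-I\\\alpha^t\end{lbmatrix}$, so everything comes down to showing $\partial[\UU]_1 = \xx$ (up to the sign convention fixed for $\partial$). My strategy is to lift $\UU$ to a partial isometry in $\matrM_\hhh\bigl(\mathcal{T}(E)^+\bigr)$ by applying Definition~\ref{key} verbatim to the universal Toeplitz-Cuntz-Krieger family of $\mathcal{T}(E)$, producing elements $\VVt$, $\PPt$, and $\UUt := \VVt + (1 - \PPt)$. Since Lemmas~\ref{foureqs} and \ref{V-pi-lem} invoke only (CK1) and (CK3), they apply to $\VVt$ and $\PPt$; in particular $\VVt$ is a partial isometry with $\PPt\VVt = \VVt\PPt = \VVt$, and $\PPt$, $\VVt\VVt^*$, $\VVt^*\VVt$ all coincide modulo $\matrM_\hhh(\mathcal{I})$ by virtue of the failed (RCK2) relations. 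A short calculation then shows that $\UUt$ is a partial isometry lifting $\UU$ with
\begin{equation*}
1 - \UUt\UUt^* = \PPt - \VVt\VVt^*, \qquad 1 - \UUt^*\UUt = \PPt - \VVt^*\VVt,
\end{equation*}
both lying in $\matrM_\hhh(\mathcal{I})$, and the standard index-map formula (cf.\ \cite{mrflnl:ikc}) yields
\begin{equation*}
\partial[\UU]_1 = [\PPt - \VVt^*\VVt]_0 - [\PPt - \VVt\VVt^*]_0 \in K_0(\mathcal{I}).
\end{equation*}

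The main obstacle will be evaluating these two $K_0$-classes explicitly. Subtracting Equation~\eqref{myvvstar} from the defining expression for $\PPt$ exhibits $\PPt - \VVt\VVt^*$ as a diagonal matrix whose only nonzero entries are the gap projection at $w$, appearing $x_w$ times for each $w \in \rV$ with $x_w > 0$; an entirely analogous computation using Equations~\eqref{myp} and \eqref{myvstarv} identifies $\PPt - \VVt^*\VVt$ as diagonal with $-x_w$ copies of the gap projection at $w$ for each $w \in \rV$ with $x_w < 0$. Under the identification $K_0(\mathcal{I}) \cong \ZZ^{\rV}$ that sends each gap-projection class to $\ee_w$, the two classes combine to $\sum_{w \in \rV} x_w \ee_w = \xx$, giving $\partial[\UU]_1 = \xx$ and hence $\chi_1 = \partial^{-1}$ as required (Remark~\ref{indep} also follows for free, since $[\UU]_1$ then equals the well-defined class $\partial^{-1}(\xx)$). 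The only genuinely delicate piece is keeping the bookkeeping for the indexing bijections $\upm{\cdot}$, $\downm{\cdot}$ and the signs straight; once that is handled, the conclusion is essentially automatic.
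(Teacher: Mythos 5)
Your proposal is correct and follows essentially the same route as the paper: the same Toeplitz extension $0\to\ker\pi\to\mathcal T(E)\to C^*(E,\rV)\to 0$, the same identification of $K_0(\ker\pi)\cong\ZZ^{\rV}$ and $K_0(\mathcal T(E))\cong\ZZ^{E^0}$ with $\iota_*$ becoming $\begin{lbmatrix}A^t-I\\\alpha^t\end{lbmatrix}$ (the paper gets this by citing Katsura rather than a direct computation), and the same lifting of $\UU$ to the partial isometry $\UUt$ with the defect computation $[1-\UUt\UUt^*]_0-[1-\UUt^*\UUt]_0=\sum_{x_w\neq 0}x_w\bigl[p_w-\sum_{s(e)=w}s_es_e^*\bigr]_0$. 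The only differences are cosmetic (your overall sign, which you flag, and the citation used for the positive cone), so no further comparison is needed.
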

\begin{proof}
  As noted in \cite{psmmt:atc}, we can realize $C^*(E,\rV)$ as a relative
  Cuntz-Pimsner algebra over a Hilbert bimodule $\bimG$. 
  It is not difficult to check that the corresponding Toeplitz algebra
  $\mathcal{T}_{\bimG}$ is isomorphic to the Toeplitz algebra
  $\mathcal{T}(E)$.
  We let $\pi : \mathcal{T} (E) \to C^*(E,\rV)$ denote the canonical map, so that
\begin{equation} \label{eq:ses}
\xymatrix{0\ar[r] & \ker \pi \ar[r]^\iota 
  & \mathcal{T}( E) \ar[r]^{\pi} 
  & C^*(E,\rV) \ar[r] &0}
\end{equation}
is exact.  The associated $K$-theory is then
\begin{equation*}
  \xymatrix{
    K_0(\ker \pi)
    \ar[r]^{\iota_*} &
    K_0\bigl(\mathcal{T}(E)\bigr)
    \ar[r]^-{\pi_*} & 
    K_0 \bigl(C^*(E,\rV)\bigr)
    \ar[d]\\ 
    K_1 \bigl(C^*(E,\rV)\bigr) 
    \ar[u]^{\partial_1} & 
    K_1 \bigl(\mathcal{T}(E)\bigr)
    \ar[l]^-{\pi_*}&
    K_1(\ker \pi). 
    \ar[l]^-{\iota_*}}
\end{equation*}

Now we appeal to Katsura's work. It follows from the results of
\cite[\S8]{tk:cac}, that $\ker\pi$ and $\mathcal{T}(E)$ are
$KK$-equivalent to the commutative $AF$-algebras $c_0(\rV)$ and
$c_0(E^0)$, respectively, and that there are group isomorphisms $\kappa : K_0(\ker \pi) \to \ZZ^{\rV}$ and $\lambda : K_0\bigl(\mathcal{T} (E)\bigr) \to \ZZ^{E^0}$ such that the diagram
\begin{equation*}
\xymatrix@C-3mm{
0\ar[r]&{K_1\bigl(C^*(E,\rV)\bigr)}\ar[r]^-{\partial_1 }&{K_0(\ker \pi)}\ar[r]^-{\iota_*}\ar[d]_-{\kappa} & {K_0\bigl(\mathcal T(E)\bigr)}\ar[d]_-{\lambda}\ar[r]^-{\pi_*}&{K_0\bigl(C^*(E,\rV)\bigr)}\ar[r]&0 \\ 
&&{\ZZ^{\rV}}\ar[r]_{\begin{lbmatrix}A^t-I\\\alpha^t  \end{lbmatrix}}&{\ZZ^{E^0}}}
\end{equation*}
commutes with the top row exact. In \cite{tk:cac} concrete
$*$-homomorphisms are given inducing $\kappa$ and $\lambda$, but we do
not need them here. All we need is the fact that $\lambda(p_v)=\ee_v$ and
\begin{equation} \label{eq:3}
  \kappa\left(\left[p_w-\sum_{s(e)=w}s_es_e^*\right]_0\right)=\ee_w
\end{equation}
for $v\in E^0$ and $w\in \rV$.
It follows that $\pi_*\circ\lambda^{-1}$ is a surjective group
homomorphism from $\ZZ^{E^0}$ to $K_0\bigl(C^*(E,\rV)\bigr)$ which for any $v\in
E^0$ maps $\ee_v$ to $[p_v]_0$ and whose kernel is
$\operatorname{im} \begin{lbmatrix}A^t-I\\\alpha^t
\end{lbmatrix}$. The existence of a group isomorphism
$\chi_0:\coker \begin{lbmatrix}A^t-I\\\alpha^t\end{lbmatrix}\to
K_0\bigl(C^*(E,\rV)\bigr)$ which for any $v\in E^0$ satisfies Equation
\eqref{eq:2} follows from this. The description of the positive cone
in the row-finite case was given in \cite[Theorem~7.1]{pamamep:nkga}.
For the general situation, it is shown in \cite[Theorem~2.2]{mt:okgc}
that the process of desingularization can be used to extend the result
from the row-finite case to the general case. 

To see that 
$\chi_1:\ker \begin{lbmatrix}A^t-I\\\alpha^t\end{lbmatrix}\to
K_1\bigl(C^*(E,\rV)\bigr)$ is a group isomorphism, fix 
$\xx\in\ker\begin{lbmatrix}A^t-I\\\alpha^t\end{lbmatrix}$
and lift $\UU=\VV+(1-\PP)\in \matrM_\hhh\bigl(C^*(E,\rV)\bigr)$ to
$\UUt=\VVt+(1-\PPt)\in\matrM_\hhh\bigl(\mathcal T(E)\bigr)$
where $\VVt$ and
$\PPt$ are the elements $V$ and $P$ in $\matrM_\hhh\bigl(\mathcal T(E)\bigr)$ we
get by using the universal Toeplitz-Cuntz-Krieger $E$-family which
generates $\mathcal T(E)$ in Definition \ref{key}. 
By Lemma~\ref{V-pi-lem}, $\VVt$ is a partial isometry with $\PPt\VVt=\VVt\PPt=\VVt$.  It follows that $\UUt$ is also a partial isometry.
We need to compute the defect of $\UUt$ as an element of
$K_0(\ker\pi)$. We have by Lemma~\ref{foureqs} that
\begin{equation*}
1-\UUt\UUt^*=\PPt-\VVt\VVt^*\
=\sum_{1\leq i\leq x_w}\left(p_w-\sum_{s(e) = w}s_es_e^*\right)\EE{\upm{w,i}}{\upm{w,i}}
\end{equation*}
and a similar equation for $1-\UUt^*\UUt$. Hence, in $K_0(\ker \pi)$ we have that
\begin{equation}\label{fortakeshi}
\bigl[1-\UUt\UUt^*\bigr]_0-\bigl[1-\UUt^*\UUt\bigr]_0=\sum_{x_w\not=0}x_w\left[p_w-\sum_{s(e)=w}s_es_e^*\right]_0
\end{equation}
which  together with Equation \eqref{eq:3} and Equation
\eqref{fortakeshi} implies that
\begin{equation}\label{leftinv}
\kappa\circ\partial_1\circ \chi_1(\xx)=\xx
\end{equation}
for any $\xx\in
\begin{lbmatrix}A^t-I\\\alpha^t  \end{lbmatrix}$. This shows that
$\chi_1$ is injective.  Let us also prove that $\chi_1$ is a group
isomorphism. Fix $\yy\in K_1(C^*(E,\rV))$ and note that 
\begin{equation*}
\begin{bmatrix}A^t-I\\\alpha^t
 \end{bmatrix}\circ\kappa\circ\partial_1(\yy)=
\lambda\circ \iota_*\circ\partial_1(\yy)=0
\end{equation*}
so that $\zz :=\kappa\circ\partial_1(\yy)$ lies in $\ker
\begin{lbmatrix}A^t-I\\\alpha^t
 \end{lbmatrix}$. Since $\kappa\circ\partial_1$ is injective, it
 follows from Equation \eqref{leftinv} that $\chi_1(\zz)=\yy$. We
 conclude that $\kappa\circ\partial_1$ is actually an inverse to
 $\chi_1$, and hence $\chi_1$ is a group isomorphism. 
\end{proof}

\section{The index map}

\renewcommand{\VVt}{\widehat{\mathsf{V}}_\xx}
\renewcommand{\PPt}{\widehat{\mathsf{P}}_\xx}
\renewcommand{\UUt}{\widehat{\mathsf{U}}_\xx}

Let $E$ be a graph and let $\III$ be a gauge-invariant ideal in
$C^*(E)$. It follows from \cite{tbdpirws:crg} that $\III$ is of the form
$\III_{(H,S)}$ for an admissible pair $(H,S)$.
Writing the adjacency matrix of $E$ with respect to the decomposition 
\begin{equation*}
E^0_\textnormal{reg} \cap H, \quad E^0_\textnormal{sing} \cap H, \quad
E^0_\textnormal{reg} \setminus H, \quad E^0_\textnormal{sing}
\backslash (H\cup S),\quad S
\end{equation*}
we arrive at the matrix
\begin{equation*}
\begin{bmatrix} A&\alpha&0&0&0\\ *&*&0&0&0\\
X&\xi&B&\beta&\eta\\
*&*&*&*&*\\
*&*&\Gamma&\gamma&Z
\end{bmatrix}.
\end{equation*}

We are now ready to state our main result. Here and below, 
whenever $T:G_1\to G_2$ is a group homomorphism between abelian groups
and $H_1$ and $H_2$ are subgroups of $G_1$ and $G_2$, respectively,
such that $T(H_1)\subseteq H_2$, then we also use $T$ to denote the group
homomorphism from $G_1/H_1$ to $G_2/H_2$ induced by $T$, and we denote by
$I_{a_1\cdots a_k}$ the canonical inclusion of the indicated
components of a direct sum into a larger direct sum, and by   
$P_{a_1\cdots a_k}$ the corresponding projection.  

\begin{theorem}\label{main}
Let $E$ be a graph and let $(H,S)$ be an admissible pair.  
The six term exact sequence in $K$-theory 
\begin{equation*}
\xymatrix{
{K_0\bigl(\III_{(H,S)}\bigr)}\ar[r]^-{\iota_*}&{K_0\bigl(C^*(E)\bigr)}\ar[r]^-{\pi_*}&{K_0\bigl(C^*(E) / \III_{(H,S)}\bigr)}\ar[d]^-{\partial_0}\\
{K_1\bigl(C^*(E) /\III_{(H,S)}\bigr)}\ar[u]^-{\partial_1}&{K_1\bigl(C^*(E)\bigr)}\ar[l]^-{\pi_*}&{K_1\bigl(\III_{(H,S)}\bigr)}\ar[l]^-{\iota_*}
}
\end{equation*}
is isomorphic to
\begin{equation*}
\xymatrix{
*[l]{\coker\begin{lbmatrix} A^t-I\\\alpha^t\\0\end{lbmatrix}}
\ar[r]^-{\widetilde{I}}&
{\coker\begin{lbmatrix} A^t-I&X^t\\\alpha^t&\xi^t\\0&B^t-I\\0&\beta^t\\0&\eta^t\end{lbmatrix}}
\ar[r]^-{P_{345}}&
*[r]{\coker\begin{lbmatrix} B^t-I&\Gamma^t\\\beta^t&\gamma^t\\\eta^t&Z^t-I\end{lbmatrix}}
  \ar[d]^-{0}\\
{\ker\begin{lbmatrix} B^t-I&\Gamma^t\\\beta^t&\gamma^t\\\eta^t&Z^t-I\end{lbmatrix}}
\ar[u]^-{\begin{lbmatrix}X^t&0\\\xi^t&0\\0&I\end{lbmatrix}}&
{\ker\begin{lbmatrix} A^t-I&X^t\\\alpha^t&\xi^t\\0&B^t-I\\0&\beta^t\\0&\eta^t\end{lbmatrix}}
\ar[l]^-{I_1\circ P_2}&
*[r]{\ker\begin{lbmatrix} A^t-I\\\alpha^t\\0\end{lbmatrix}}\ar[l]^-{I_1}
}
\end{equation*}
where $\widetilde{I}$ is given by the block matrix
\begin{equation*}
\begin{bmatrix}
I&0&0\\0&I&0\\0&0&-\Gamma^t\\0&0&-\gamma^t\\0&0&I-Z^t
\end{bmatrix}
=I_{125}-
\begin{bmatrix}
0&0&0\\0&0&0\\0&0&\Gamma^t\\0&0&\gamma^t\\0&0&Z^t
\end{bmatrix}.
\end{equation*}
\end{theorem}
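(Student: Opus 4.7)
The plan is to establish the isomorphism between the two six-term sequences map by map; once both rows and the maps are identified, exactness of the bottom row follows automatically from exactness of the top. The first step is to apply Proposition~\ref{prop:k-groups} three times to express each of the three $K$-group pairs as kernels and cokernels of the indicated matrices. For $\AAA$ this is the matrix $\begin{lbmatrix}A^t-I&X^t\\\alpha^t&\xi^t\\0&B^t-I\\0&\beta^t\\0&\eta^t\end{lbmatrix}$ obtained by refining the two-block decomposition $E^0_\textnormal{reg}\sqcup E^0_\textnormal{sing}$ into the five blocks of the theorem. For the quotient, I would use the description from \cite{psmmt:atc} realizing $\AAA/\III_{(H,S)}\cong C^*(E',V')$, where $E'$ is the restriction of $E$ to $E^0\setminus H$ and $V'=(E^0_\textnormal{reg}\setminus H)\cup S$; listing $V'$ first in the adjacency matrix of $E'$ yields the right-hand matrix $\begin{lbmatrix}B^t-I&\Gamma^t\\\beta^t&\gamma^t\\\eta^t&Z^t-I\end{lbmatrix}$. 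For the ideal I would use the analogous realization as a graph algebra with regular vertices $E^0_\textnormal{reg}\cap H$, singular vertices $E^0_\textnormal{sing}\cap H$, and one extra sink per element of $S$ supplied by the gap projection $p_{v_0}^H$; the trailing zero row of $\begin{lbmatrix}A^t-I\\\alpha^t\\0\end{lbmatrix}$ reflects the absence of edges out of those extra sinks.

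The horizontal maps are then determined by tracking the effect on the generators produced by $\chi_0$ and $\chi_1$. The inclusion $\iota_*$ sends $[p_v]_0\mapsto[p_v]_0$ for $v\in H$, giving the two identity blocks of $\widetilde I$; while $[p_{v_0}^H]_0=[p_{v_0}]_0-\sum_{s(e)=v_0,\,r(e)\notin H}[p_{r(e)}]_0$ unfolds in $\ZZ^{E^0}$ as the third column of $\widetilde I$, the edges from $v_0\in S$ out of $H$ being counted precisely by the blocks $\Gamma,\gamma,Z$. The surjection $\pi_*$ kills $[p_v]_0$ for $v\in H$ and is the identity on all other generators, so it descends to the coordinate projection $P_{345}$ on cokernels. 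The exponential map $\partial_0$ vanishes because every generator of $K_0(\AAA/\III_{(H,S)})$ produced by $\chi_0$ has the form $[\pi(p_v)]_0$ for $v\in E^0\setminus H$, and the projection $p_v\in\AAA$ is already a lift.

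The main work is the index map $\partial_1$. Given $\xx\in\ker\begin{lbmatrix}B^t-I&\Gamma^t\\\beta^t&\gamma^t\\\eta^t&Z^t-I\end{lbmatrix}$ and the corresponding unitary $\UU\in\matrM_\hhh(\AAA/\III_{(H,S)})$ from Definition~\ref{key} applied inside $C^*(E',V')$, the strategy is to build a \emph{partial-isometry} lift $\UUt=\VVt+(1-\PPt)\in\matrM_\hhh(\AAA)$ by replicating the formulas of Definition~\ref{key} verbatim in the Cuntz-Krieger $E$-family of $\AAA$, but with every sum of the form $\sum_{s(e)=w}$ restricted to those edges $e$ with $r(e)\notin H$. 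This restriction is forced because the vertices of $S$ are infinite emitters in $E$ with only finitely many edges leaving $H$ (by the definition of $B_H$), so the unrestricted sums would not converge; for $w\in E^0_\textnormal{reg}\setminus H$ the restriction is innocuous but leaves behind exactly the defect. A rerun of the matrix-unit calculations of Lemmas~\ref{foureqs} and \ref{V-pi-lem} over the restricted index sets shows that $\VVt$ remains a partial isometry with $\PPt\VVt=\VVt\PPt=\VVt$, and $\VVt\PPt=\VVt$ immediately yields $\VVt^*\VVt\cdot(1-\PPt)=0$; these two facts together imply that $\UUt$ is itself a partial isometry in $\matrM_\hhh(\AAA)$ lifting $\UU$, so that the partial-isometry formula for the index gives $\partial_1([\UU]_1)=[\PPt-\VVt\VVt^*]_0-[\PPt-\VVt^*\VVt]_0\in K_0(\III_{(H,S)})$.

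To evaluate the two defects, observe that the $(\upm{w,i},\upm{w,i})$-entry of $\PPt-\VVt\VVt^*$ (for $1\leq i\leq x_w$) equals $p_w-\sum_{s(e)=w,\,r(e)\notin H}s_es_e^*$; this collapses via (CK2) to $\sum_{s(e)=w,\,r(e)\in H}s_es_e^*$ when $w\in E^0_\textnormal{reg}\setminus H$, and equals the gap projection $p_w^H$ when $w\in S$. Passing to $K_0$-classes with $[s_es_e^*]_0=[p_{r(e)}]_0$, the first case produces the block counts $X,\xi$ of edges from $w$ into $H$, while the second produces the $S$-generator of $K_0(\III_{(H,S)})$; summing over positions indexed by $\upindex$ with multiplicities $x_w>0$, and combining with the analogous contributions from the positions indexed by $\downindex$ (handling the $x_w<0$ part), a direct bookkeeping produces exactly $\begin{lbmatrix}X^t\xx_3\\ \xi^t\xx_3\\ \xx_5\end{lbmatrix}=\begin{lbmatrix}X^t&0\\ \xi^t&0\\ 0&I\end{lbmatrix}\xx$, where $\xx=(\xx_3,\xx_5)$ is the decomposition along the blocks $E^0_\textnormal{reg}\setminus H$ and $S$ of $V'$. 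I expect the main technical obstacle to be the earlier step---establishing the partial-isometry and compatibility identities for $\VVt,\PPt$ with the restricted sums, in the absence of (CK2) at the infinite-emitter vertices of $S$---after which the defect evaluation and the final diagram chase are essentially direct.
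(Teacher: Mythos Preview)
Your proposal is correct and follows essentially the same route as the paper: identify the three $K$-group pairs via Proposition~\ref{prop:k-groups}, verify the squares for $\iota_*$, $\pi_*$, and $\partial_1$ on generators, and compute $\partial_1$ by lifting $\UU$ to the partial isometry $\UUt=\VVt+(1-\PPt)$ built from the $r(e)\notin H$--restricted sums and reading off the defects. Two small corrections: the vertices of $S$ in the ideal's graph $E_{(H,S)}$ are \emph{infinite emitters}, not sinks (a breaking vertex has infinitely many edges into $H$), and the trailing zero row of $\begin{lbmatrix}A^t-I\\\alpha^t\\0\end{lbmatrix}$ records the absence of edges \emph{into} $S$ from $E^0_{\textnormal{reg}}\cap H$ (which holds because $H$ is hereditary); neither point affects your matrix or your argument, and your direct proof that $\partial_0=0$ via projection lifts is a valid alternative to the paper's deduction from injectivity of $\iota_*$ on $K_1$.
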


Each cokernel is ordered as described in Theorem
\ref{prop:k-groups}. We postpone the proof of the theorem to the
ensuing section, but remark here that the isomorphism between the two
six term exact sequences is given by explicit defined maps which are described
in the proof. 

For now, let us record a number of examples and specializations:

\begin{remark} \label{remark:reduced}
  If the saturated hereditary subset $H$ has no breaking vertices (this is always the case if
  $E$ is row-finite), or if 
  $S=\emptyset$, then the six term exact sequence of Theorem \ref{main}
 reduces to 
  \begin{equation} \label{six-term-reduced}
    \begin{split}
    \xymatrix{
      {\coker\begin{lbmatrix} A^t-I\\\alpha^t\end{lbmatrix}}
      \ar[r]^-{I_{12}}&
      {\coker\begin{lbmatrix} A^t-I&X^t\\\alpha^t&\xi^t\\0&B^t-I\\0&\beta^t\end{lbmatrix}}
      \ar[r]^-{P_{34}}&
      *[r]{\coker\begin{lbmatrix} B^t-I\\\beta^t\end{lbmatrix}}
      \ar[d]^-{0}\\
      {\ker\begin{lbmatrix} B^t-I\\\beta^t\end{lbmatrix}}
      \ar[u]^-{\begin{bmatrix}X^t\\\xi^t\end{bmatrix}}&
      {\ker\begin{lbmatrix} A^t-I&X^t\\\alpha^t&\xi^t\\0&B^t-I\\0&\beta^t\end{lbmatrix}}
      \ar[l]^-{P_2}&
      {\ker\begin{lbmatrix} A^t-I\\\alpha^t\end{lbmatrix}.}\ar[l]^-{I_1}
    }
  \end{split}
  \end{equation}
\end{remark}

\begin{remark}
Let $E$ be a row-finite graph with no sinks. Then any gauge-invariant
ideal in $C^*(E)$ has the form $\III_H$ for some saturated hereditary
subset $H$ and the six term exact sequence
of Theorem \ref{main} reduces in this case to 
\begin{equation*}
\xymatrix{
{\coker\begin{bmatrix} A^t-I\end{bmatrix}}
\ar[r]^-{I_1}&
{\coker\begin{bmatrix} A^t-I&X^t\\ 0&B^t-I\end{bmatrix}}
\ar[r]^-{P_2}&
{\coker\begin{bmatrix} B^t-I\\ \end{bmatrix}}
  \ar[d]^-{0}\\
{\ker\begin{bmatrix} B^t-I\\ \end{bmatrix}}
\ar[u]^-{\begin{bmatrix}X^t \end{bmatrix}}&
{\ker\begin{bmatrix} A^t-I&X^t\\ 0&B^t-I\\ \end{bmatrix}}
\ar[l]^-{P_2}&
{\ker\begin{bmatrix} A^t-I\end{bmatrix}.}\ar[l]^-{I_1}
}
\end{equation*}
\end{remark}

\begin{corollary} \label{cor}
Let $E$ be a graph such that the associated graph $C^*$-algebra $C^*(E)$ contains a unique proper nontrivial ideal.  Then this ideal has the form $\III_H$ for some saturated hereditary subset $H$ with no breaking vertices.  Consequently, the cyclic six term exact sequence determined by the short exact sequence $0 \to \III_H \to C^*(E) \to C^*(E) / \III_H \to 0$ is isomorphic to the cyclic exact sequence described in \eqref{six-term-reduced}.
\end{corollary}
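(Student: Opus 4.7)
The plan is to reduce the corollary to a direct application of Remark~\ref{remark:reduced} by identifying the unique proper nontrivial ideal $\III$ as $\III_H$ for a saturated hereditary $H$ with no breaking vertices. The argument splits into two parts: first showing that $\III$ is gauge-invariant and corresponds to an admissible pair $(H,S)$ with $H\neq\emptyset$ and $H\neq E^0$, and then showing $B_H=\emptyset$ and $S=\emptyset$.

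For the first part, I would observe that for each $z\in\mathbb{T}$ the image $\gamma_z(\III)$ is a proper nontrivial ideal, since $\gamma_z$ is an automorphism of $C^*(E)$. By the uniqueness hypothesis $\gamma_z(\III)=\III$, so $\III$ is gauge-invariant, and hence $\III=\III_{(H,S)}$ for some admissible pair $(H,S)$ by \cite[Theorem~3.6]{tbdpirws:crg}. The bijectivity of $(H,S)\mapsto \III_{(H,S)}$, together with the fact that $(\emptyset,\emptyset)$ corresponds to $0$ and that $(E^0,\emptyset)$ is the unique maximum of the admissible-pair lattice (corresponding to $C^*(E)$), forces $H\neq\emptyset$ and $H\neq E^0$.

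For the second part, I would argue by contradiction. Assume $B_H\neq\emptyset$; then there exists $S'\subseteq B_H$ with $S'\neq S$ (take $S'=\emptyset$ or $S'=B_H$, whichever differs from $S$). The pair $(H,S')$ is admissible, and by injectivity of the bijection $\III_{(H,S')}$ is a gauge-invariant ideal distinct from $\III$. Since $H\neq\emptyset$ we have $\III_{(H,S')}\neq 0$, and since $(H,S')\neq (E^0,\emptyset)$ (because $H\neq E^0$) we have $\III_{(H,S')}\neq C^*(E)$. Thus $\III_{(H,S')}$ is a second proper nontrivial ideal, contradicting uniqueness. Therefore $B_H=\emptyset$, which forces $S\subseteq B_H=\emptyset$, and so $\III=\III_H$.

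Finally, with $\III=\III_H$ and $B_H=\emptyset$ established, Remark~\ref{remark:reduced} applies directly, yielding the reduced six-term exact sequence \eqref{six-term-reduced}. I do not foresee any substantive obstacle; the only delicate point is the construction of a second ideal in the contradiction argument, which rests entirely on the lattice bijection with admissible pairs from \cite[Theorem~3.6]{tbdpirws:crg}.
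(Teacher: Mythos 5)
Your proposal is correct, but it takes a genuinely different route from the paper: the paper's entire proof is a one-line appeal to \cite[Lemma~3.1]{semt:cnga} for the fact that the unique proper nontrivial ideal is $\III_H$ with $B_H=\emptyset$, whereas you reprove that fact from scratch using the lattice bijection of \cite[Theorem~3.6]{tbdpirws:crg}. Your two steps are sound: the gauge automorphisms permute the proper nontrivial ideals, so uniqueness forces gauge-invariance and hence $\III=\III_{(H,S)}$; and if $B_H\neq\emptyset$ the injectivity of $(H,S)\mapsto\III_{(H,S)}$ produces a second proper nontrivial ideal $\III_{(H,S')}$, a contradiction. The only point you gloss over is ruling out $H=\emptyset$ with $S\neq\emptyset$: this needs the (immediate) observation that $B_\emptyset=\emptyset$, so the only admissible pair with $H=\emptyset$ is $(\emptyset,\emptyset)$; similarly $B_{E^0}=\emptyset$ handles the top of the lattice. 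What your approach buys is a self-contained argument that essentially reconstructs the cited lemma; what the paper's approach buys is brevity, delegating the structural fact to the reference where it is already established. The final step---feeding $\III=\III_H$, $B_H=\emptyset$ into Remark~\ref{remark:reduced}---is exactly what the paper intends, so the conclusion matches.
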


\begin{proof}
If $E$ has a unique proper nontrivial ideal, then it follows from \cite[Lemma~3.1]{semt:cnga} that the ideal has the form $\III_H$ for a saturated hereditary subset $H$ with no breaking vertices.
\end{proof}
\newcommand{\blank}{\makebox[1ex]{\rule{0ex}{1ex}{}}}
\begin{examp} Consider the class of graphs $E_{x,y,z}$ given by the adjacency matrix
\begin{equation*}
\begin{bmatrix}
0&0&0&0\\
x&1&1&0\\
y&1&1&1\\
z&0&1&1
\end{bmatrix}
\end{equation*}
where $x, y, z \in \mathbb{N}$.  These graphs all satisfy
Condition~(K) and have one nontrivial saturated hereditary subset (the
subset consisting of the first vertex).
Thus we are in the situation
of Corollary \ref{cor}, with $E^0_\textnormal{reg}=\{v_2,v_3,v_4\}$
and $E^0_\textnormal{reg}=H=\{v_1\}$. Hence the adjacency matrix has the block form
\begin{equation*}
\left[\begin{array}{c|ccc}
\alpha&&0&\\\hline
\blank&\blank&\blank&\blank\\
\xi&\blank&B&\blank\\
\blank&\blank&\blank&\blank\\
\end{array}
\right]
\end{equation*}
and the six-term exact sequence is
\begin{equation*}
\xymatrix{
{\coker 0_{1\times 0}}
\ar[r]^-{I_1}&
{\coker\begin{lbmatrix} x&y&z\\0&1&0\\1&0&1\\0&1&0\end{lbmatrix}}
\ar[r]^-{P_{234}}&
*[r]{\coker\begin{lbmatrix}0&1&0\\1&0&1\\0&1&0\end{lbmatrix}}
  \ar[d]^-{0}\\
{\ker\begin{lbmatrix}0&1&0\\1&0&1\\0&1&0\end{lbmatrix}}
\ar[u]^-{\begin{lbmatrix}x&y&z\end{lbmatrix}}&
{\ker\begin{lbmatrix} x&y&z\\0&1&0\\1&0&1\\0&1&0\end{lbmatrix}}
\ar[l]^-{P_{123}}&
{\ker 0_{1\times 0}}\ar[l]^-{0}
}
\end{equation*}
which simplifies to
\begin{equation*}
\xymatrix{
0\ar[r]&
{\ZZ}
\ar[r]^-{x-z}&
{\ZZ}
\ar[r]&
{ \ZZ/(x-z)\oplus \ZZ}
\ar[r]&
{\ZZ}
  \ar[r]&0}
\end{equation*}
when $x\not=z$ and to
\begin{equation*}
\xymatrix{
0\ar[r]&
{\ZZ}\ar@{=}[r]&
{\ZZ}
\ar[r]^-{0}&
{\ZZ}\ar[r]&
{\ZZ\oplus \ZZ}
\ar[r]&
{\ZZ}
  \ar[r]&0
}
\end{equation*}
when $z=x$.

The $K_0$-group of the ideal is canonically ordered, and the order of the $K_0$-group of the quotient is trivial, irrespective of $x,y,z$. We may hence apply \cite{semt:cnga} to prove that
$C^*(E_{x,y,z})\otimes \KKK\simeq C^*(E_{x',y',z'})\otimes \KKK$
precisely when $x-z=\pm(x'-z')$.
\end{examp}

\begin{examp}
Consider the class of graphs $F_{y,z}$ given by the adjacency matrix
\begin{equation*}
\begin{bmatrix}
0&0&0\\
y&3&1\\
\infty&z&3
\end{bmatrix}
\end{equation*}
where $ y, z \in \mathbb{N}$.  These graphs all satisfy
Condition~(K) and have one nontrivial saturated hereditary subset
$\{v_1\}$ for which $\{v_3\}$ is breaking. We furthermore have that
$E^0_\textnormal{reg}=\{v_2\}$ and
$E^0_\textnormal{sing}=\{v_1,v_3\}$.
If we consider the ideal
$\III_{(\{v_1\},\{v_3\})}$, then the adjacency matrix has the block form
\begin{equation*}
\left[
\begin{array}{c|c|c}
*&0&0\\\hline
\xi&B&\eta\\\hline
*&\Gamma&Z
\end{array}
\right]
\end{equation*}
which gives
\begin{equation*}
\xymatrix{
{\coker 0_{2\times 0}}\ar[r]^-{\begin{lbmatrix}1&0\\0&-z\\0&-2
  \end{lbmatrix}}
  &
{\coker\begin{lbmatrix}y\\2\\1\end{lbmatrix}}\ar[r]&
{\coker\begin{lbmatrix}2&z\\1&2\end{lbmatrix}}\ar[d]
\\
{\ker\begin{lbmatrix}2&z\\1&2\end{lbmatrix}}\ar[u]^-{\begin{lbmatrix}y&0\\0&1\end{lbmatrix}}&
{\ker\begin{lbmatrix}y\\2\\1\end{lbmatrix}}\ar[l]&
{\ker 0_{2\times 0}}\ar[l]
}
\end{equation*}
simplifying to
\begin{equation*}
\xymatrix@C+3mm{
0\ar[r]&
{\ZZ^2}\ar[r]^{\begin{lbmatrix}-1&-2y\\0&4-z
  \end{lbmatrix}}
  &
{\ZZ^2}\ar[r]&
{\ZZ_{z-4}}\ar[r]&0}
\end{equation*}
when $z\not=4$ and to 
\begin{equation*}
\xymatrix@C+3mm{
0\ar[r]&{\ZZ}\ar[r]^{\begin{lbmatrix}-2y\\1
  \end{lbmatrix}}
&
{\ZZ^2}\ar[r]^{\begin{lbmatrix}-1&-2y\\0&0
  \end{lbmatrix}}
  &
{\ZZ^2}\ar[r]&{\ZZ}\ar[r]&
{0}}
\end{equation*}
when $z=4$.

In both cases, the $K_0$-group of the ideal is ordered by 
\begin{equation*}
\bigl\{(x_1,x_3): x_3>1\text{ or } [x_3=0, x_1\geq 0]\bigr\},
\end{equation*}
having only the trivial automorphism,
so the computations combine with \cite[Theorem 4.7]{semt:cnga} to show that 
$C^*(F_{y,z})\otimes \KKK\simeq C^*(F_{y',z'})\otimes \KKK$
precisely when $4-z=\pm(4-z')$ and $y-y'\in (4-z)\ZZ$.
\end{examp}

\section{Proof of main result}

The isomorphism of the two six-term exact sequences in Theorem
\ref{main} is given by the six group isomorphisms
$\chi_0',\chi_0,\chi_0'',\chi_1',\chi_1,\chi_1''$ defined as follows.   If we let $E_{(H,S)}$ be the subgraph of $E$ with vertices $H\cup S$
and edges $s^{-1}(H)\cup \bigl(s^{-1}(S)\cap r^{-1}(H)\bigr)$, then the graph
$C^*$-algebra $C^*\bigl(E_{(H,S)}\bigr)$ is isomorphic to a full corner of
$\III_{(H,S)}$  via an embedding $\phi : C^*\bigl(E_{(H,S)}\bigr) \to \III_{(H,S)}$
with $\phi(p_v) = p_v$ for $v\in H$, $\phi(p_{v_0})=p^H_{v_0}$ for
$v_0\in S$ and $\phi(s_e ) = s_e$ for $e \in E^1_{(H,S)}$
(cf. \cite{tbdpirws:crg}).
Notice that
$(E_{(H,S)})^0_{\textnormal{reg}}=E^0_{\textnormal{reg}}\cap H$ and
that $(E_{(H,S)})^0_{\textnormal{sing}}=(E^0_{\textnormal{sing}}\cap
H)\cup S$.
It follows (for example by \cite[Proposition~1.2]{wlp:kacgc}) that $\phi$
induces an isomorphism $\phi_* : K_*\bigl(C^*(E_{(H,S)})\bigr) \to
K_*\bigl(\III_{(H,S)}\bigr)$.  
Thus if we let $\chi_*^{E_{(H,S)}}$denote the group isomorphisms given by Proposition
\ref{prop:k-groups} for $C^*(E_{(H,S)})$, then 
\begin{align*}
  \chi_0'&:=\phi_*\circ \chi_0^{E_{(H,S)}}: \coker\begin{lbmatrix} A^t-I\\\alpha^t\\0\end{lbmatrix}\to
  K_0\bigl(\III_{(H,S)}\bigr)
  \shortintertext{and} 
  \chi_1'&:=\phi_*\circ \chi_1^{E_{(H,S)}}: \ker\begin{lbmatrix} A^t-I\\\alpha^t\\0\end{lbmatrix}\to
  K_1\bigl(\III_{(H,S)}\bigr) 
\end{align*}
are group isomorphisms. 
Similarly, if we let $E \setminus H$ be the subgraph of
$E$ with vertices $E^0 \setminus H$ and edges $r^{-1}(E^0\setminus
H)$, then there is an isomorphism $\psi:C^*(E\setminus H,S)\to C^*(E)/\III_{(H,S)}$
which for any $v\in E^0\setminus H$ maps $p_v$ to
$p_v+\III_{(H,S)}$ and for any $e\in r^{-1}(E^0\setminus H)$
maps $s_e$ to $s_e+\III_{(H,S)}$ (cf. \cite[Example 3.10]{psmmt:atc}).
Notice that
$(E\setminus H)^0_{\textnormal{reg}}=E^0_{\textnormal{reg}}\setminus H$ and
that $(E\setminus
H)^0_{\textnormal{sing}}=E^0_{\textnormal{sing}}\setminus H$.
Thus if we let 
$\chi_*^{(E\setminus H,S)}$
denote the group isomorphisms given by Proposition
\ref{prop:k-groups} for $C^*(E\setminus H,S)$, then 
\begin{align*}
  \chi_0''&:=\psi_*\circ\chi_0^{(E\setminus H,S)}: \coker\begin{lbmatrix}
    B^t-I&\Gamma^t\\\beta^t&\gamma^t\\\eta^t&Z^t-I\end{lbmatrix} \to
  K_0\bigl(C^*(E)/\III_{(H,S)}\bigr)
  \shortintertext{and} 
  \chi_1''&:=\psi_*\circ\chi_1^{(E\setminus H,S)}: \ker\begin{lbmatrix}
    B^t-I&\Gamma^t\\\beta^t&\gamma^t\\\eta^t&Z^t-I\end{lbmatrix} \to
  K_1\bigl(C^*(E)/\III_{(H,S)}\bigr) 
\end{align*}
are group isomorphisms. 
Finally we let $\chi_*$
denote the group isomorphisms given directly by Proposition
\ref{prop:k-groups} for $C^*(E)$.

The theorem then follows from the ensuing six claims.

\begin{claim}
  $\iota_*\circ\chi_0'=\chi_0\circ \widetilde{I}$.
\end{claim}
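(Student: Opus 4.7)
Both maps are group homomorphisms from $\coker\begin{lbmatrix}A^t-I\\\alpha^t\\0\end{lbmatrix}$ to $K_0\bigl(C^*(E)\bigr)$, so the plan is to check that they agree on the canonical generating set $\bigl\{\ee_v+\operatorname{im}\bigl[\begin{smallmatrix}A^t-I\\\alpha^t\\0\end{smallmatrix}\bigr] : v\in H\cup S\bigr\}$, indexed by the vertex set $(E_{(H,S)})^0 = H\cup S$. Partition the argument into two cases according to whether $v$ lies in $H$ or in $S$.

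First, for $v\in H$ (regular or singular in $E_{(H,S)}$), the block-matrix form of $\widetilde{I}=I_{125}-(\cdots)$ shows that $\widetilde{I}(\ee_v)=\ee_v$ (landing in the first or second block of the larger cokernel), so Proposition \ref{prop:k-groups} applied to $C^*(E)$ gives $\chi_0\circ\widetilde{I}(\ee_v+\operatorname{im})=[p_v]_0$. On the other side, $\chi_0^{E_{(H,S)}}(\ee_v+\operatorname{im})=[p_v]_0$ in $K_0\bigl(C^*(E_{(H,S)})\bigr)$, and $\phi(p_v)=p_v\in \III_{(H,S)}$, so $\iota_*\circ\chi_0'(\ee_v+\operatorname{im})=[p_v]_0\in K_0\bigl(C^*(E)\bigr)$, matching.

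For $v\in S$, the key computation is reading off the action of $\widetilde{I}$ using the formula $\widetilde{I}=I_{125}-\bigl[\begin{smallmatrix}0&0&0\\0&0&0\\0&0&\Gamma^t\\0&0&\gamma^t\\0&0&Z^t\end{smallmatrix}\bigr]$. Since the columns of $\Gamma^t$, $\gamma^t$, $Z^t$ indexed by $v$ count, respectively, the edges from $v$ to $E^0_\textnormal{reg}\setminus H$, to $E^0_\textnormal{sing}\setminus(H\cup S)$, and to $S$ (i.e.\ all edges from $v$ whose range lies outside $H$), one obtains
\begin{equation*}
\widetilde{I}(\ee_v)=\ee_v-\sum_{\substack{s(e)=v\\ r(e)\notin H}}\ee_{r(e)},
\end{equation*}
so Proposition \ref{prop:k-groups} gives $\chi_0\circ\widetilde{I}(\ee_v+\operatorname{im})=[p_v]_0-\sum_{s(e)=v,\ r(e)\notin H}[p_{r(e)}]_0$. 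By (CK1) this equals $[p_v-\sum_{s(e)=v,r(e)\notin H}s_es_e^*]_0=[p_v^H]_0$. Meanwhile $\phi(p_v)=p_v^H$, so $\iota_*\circ\chi_0'(\ee_v+\operatorname{im})=[p_v^H]_0\in K_0\bigl(C^*(E)\bigr)$, and the two sides agree.

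There is essentially no obstacle; the only thing to be careful about is the bookkeeping of which of the five vertex blocks each basis vector sits in. The conceptual point is that the minus-$\Gamma^t$, $-\gamma^t$, $I-Z^t$ entries of $\widetilde{I}$ were engineered precisely to encode the gap-projection correction $p_v^H=p_v-\sum_{s(e)=v,\ r(e)\notin H}s_es_e^*$ at the level of $K_0$, so once one rewrites $\widetilde{I}(\ee_v)$ as above, the identification is automatic.
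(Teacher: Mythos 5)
Your proposal is correct and follows essentially the same route as the paper: both check the identity on the generators $\ee_v+\operatorname{im}$ for $v\in H\cup S$, split into the cases $v\in H$ and $v\in S$, compute $\widetilde{I}(\ee_v)=\ee_v-\sum_{s(e)=v,\,r(e)\notin H}\ee_{r(e)}$ in the breaking-vertex case, and identify the result with $[\phi(p_v)]_0=[p_v^H]_0$ via $[p_{r(e)}]_0=[s_e^*s_e]_0=[s_es_e^*]_0$. The only cosmetic difference is that the paper writes the value of $\chi_0$ directly as $[p_{v_0}]_0-\sum[s_es_e^*]_0$, whereas you pass through $[p_{r(e)}]_0$ first; the underlying Murray--von Neumann identification is the same.
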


\begin{proof}
  If $v\in H$, then we have that
  \begin{eqnarray*}
    \chi_0\circ \begin{lbmatrix}I&0&0\\0&I&0\\0&0&-\Gamma^t\\0&0&-\gamma^t\\0&0&I-Z^t\end{lbmatrix}
    \left(\ee_v+\operatorname{im}\begin{lbmatrix} A^t-I\\\alpha^t\\0\end{lbmatrix}\right)
    &=&\chi_0\left(\ee_v+\operatorname{im} 
      \begin{lbmatrix}
        A^t-I&X^t\\\alpha^t&\xi^t\\0&B^t-I\\0&\beta^t\\0&\eta^t\end{lbmatrix}\right)\\
    &= &[p_v]_0=\bigl[\iota(\phi(p_v))\bigr]_0\\
    &=&\iota_*\circ\chi_0' 
    \left(\ee_v+\operatorname{im}\begin{lbmatrix} A^t-I\\\alpha^t\\0\end{lbmatrix}\right),
  \end{eqnarray*}
  and if $v_0\in S$, the left hand side equals
  \begin{eqnarray*}
   &&\chi_0\left(\ee_{v_0}-\sum_{\substack{s(e) = v_0\\ r(e) \notin H}} \ee_{r(e)}+\operatorname{im} 
      \begin{lbmatrix}
        A^t-I&X^t\\\alpha^t&\xi^t\\0&B^t-I\\0&\beta^t\\0&\eta^t\end{lbmatrix}\right)\\
    &= &[p_{v_0}]_0-\sum_{\substack{s(e) = v_0\\r(e) \notin
        H}}[s_es_e^*]_0=\bigl[\iota(p_{v_0}^H)\bigr]_0=\bigl[\iota(\phi(p_v))\bigr]_0\\
    &=&\iota_*\circ\chi_0' 
    \left(\ee_v+\operatorname{im}\begin{lbmatrix} A^t-I\\\alpha^t\\0\end{lbmatrix}\right).
  \end{eqnarray*}
\end{proof}

\begin{claim}
  $\pi_*\circ\chi_0=\chi_0''\circ P_{345}$.
\end{claim}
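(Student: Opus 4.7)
The plan is to verify the identity $\pi_*\circ \chi_0 = \chi_0''\circ P_{345}$ on the generating classes $\ee_v + \operatorname{im}(\cdot)$ of $\coker\begin{lbmatrix} A^t-I&X^t\\\alpha^t&\xi^t\\0&B^t-I\\0&\beta^t\\0&\eta^t\end{lbmatrix}$ as $v$ ranges over $E^0$, since both compositions are group homomorphisms.

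First I would check that $P_{345}$ descends to a well-defined homomorphism of cokernels. Given $(\xx_1,\xx_2) \in \ZZ^{E^0_\textnormal{reg}\cap H}\oplus \ZZ^{E^0_\textnormal{reg}\setminus H}$, applying the large matrix and then projecting onto blocks $3,4,5$ yields $\bigl((B^t-I)\xx_2,\ \beta^t\xx_2,\ \eta^t\xx_2\bigr)$, which is precisely the image of $(\xx_2,0)$ under the smaller $3\times 2$ block matrix used in the definition of $\chi_0''$. Hence $P_{345}$ sends $\operatorname{im}(\text{large})$ into $\operatorname{im}(\text{small})$, and the induced map of quotients makes sense. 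This step relies crucially on the vanishing of the upper-right block in the big matrix, which reflects the fact that $H$ is hereditary.

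Next I would split the generator check into two cases based on the location of $v$. If $v \in H$, i.e., $v$ belongs to block $1$ or block $2$, then $P_{345}(\ee_v)=0$, so the right-hand side is $\chi_0''(0)=0$. On the left, $\chi_0(\ee_v + \operatorname{im}) = [p_v]_0$, and because $\III_{(H,S)}$ is by construction generated by $\{p_v : v\in H\}\cup \{p_{v_0}^H : v_0\in S\}$, we have $p_v\in \III_{(H,S)}$ and therefore $\pi_*([p_v]_0)=0$. If instead $v\in E^0\setminus H$ (block $3$, $4$, or $5$), then $P_{345}(\ee_v)=\ee_v$, now viewed as an element of $\ZZ^{E^0\setminus H}$; unpacking the right-hand side gives $\chi_0''(\ee_v+\operatorname{im})=\psi_*\circ \chi_0^{(E\setminus H,S)}(\ee_v+\operatorname{im})=\psi_*([p_v]_0)=[p_v+\III_{(H,S)}]_0$ by the definition of $\psi$, which agrees with $\pi_*\circ \chi_0(\ee_v+\operatorname{im})=\pi_*([p_v]_0)$.

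The one point that deserves care is the well-definedness of $P_{345}$ at the cokernel level; once that is settled, the generator-by-generator comparison is immediate from the constructions of $\chi_0$, $\chi_0''$, $\psi$, and the defining generators of $\III_{(H,S)}$.
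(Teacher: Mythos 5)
Your proof is correct and follows essentially the same route as the paper's: both verify the identity on the generating classes $\ee_v$, with both sides vanishing for $v\in H$ and both equal to $[p_v]_0$ for $v\notin H$. The only difference is that you also spell out the well-definedness of $P_{345}$ at the level of cokernels, which the paper leaves implicit; that check is accurate and harmless to include.
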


\begin{proof}
As above, we check the claim of each class given by $\ee_v$. If $v\in H$, then both sides vanish. If $v\notin H$, both sides equal $[p_v]_0$.
\end{proof}

\begin{claim} \label{ciii}
  $\pi_*\circ\chi_1=\chi_1''\circ I_1\circ P_2$.
\end{claim}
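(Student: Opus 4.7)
Write $\xx = (\xx_1, \xx_3)$ with $\xx_1 \in \ZZ^{E^0_\textnormal{reg} \cap H}$ and $\xx_3 \in \ZZ^{E^0_\textnormal{reg} \setminus H}$. The last three rows of the kernel condition on $\xx$ read $(B^t-I)\xx_3 = \beta^t\xx_3 = \eta^t\xx_3 = 0$, so $(\xx_3, 0)$ automatically lies in $\ker \begin{lbmatrix} B^t-I & \Gamma^t \\ \beta^t & \gamma^t \\ \eta^t & Z^t-I \end{lbmatrix}$. Unwinding the definitions of $\chi_1$ and $\chi_1''$, the claim amounts to
\begin{equation*}
  [\pi(\UU)]_1 = [\psi(U')]_1 \quad\text{in } K_1\bigl(C^*(E)/\III_{(H,S)}\bigr),
\end{equation*}
where $\UU \in \matrM_{\hhh}\bigl(C^*(E)\bigr)$ is as in Definition \ref{key} for $\xx$ and $U' \in \matrM_{\hhh'}\bigl(C^*(E \setminus H, S)\bigr)$ denotes the analogous element for $(\xx_3, 0)$ built from the universal Cuntz--Krieger $(E\setminus H, S)$-family, with $\hhh'$ the common cardinality of its index sets.

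The plan is to use the freedom in choosing bijections (Remark \ref{indep}) to realize $\pi(\UU)$ as $\operatorname{diag}(\psi(U'), 1_{\hhh - \hhh'})$, whose $K_1$-class equals that of $\psi(U')$ by stability. First I would compare the up- and down-index sets for $\xx$ in $E$ with those for $(\xx_3, 0)$ in $E \setminus H$. Since $H$ is hereditary (every edge out of $H$ stays in $H$) and $\xx_1$ is supported on $H$, the contributions to $L^\pm$ at any vertex $v \in E^0 \setminus H$ coincide between the two settings; only at $v \in H$ do ``extra'' elements appear, namely those arising from $\xx_1$-coordinates or from edges ending in $H$. Lemma \ref{matchup} applied to $\xx$ in $E$ forces these extras to match in number between up and down at each $v \in H$, so I can choose bijections $\upm{\cdot}$, $\downm{\cdot}$ for the $E$-sets whose restrictions to the common subsets map bijectively onto $\{1, \ldots, \hhh'\}$ (realizing the $(E\setminus H, S)$-bijections) and which send the extras bijectively into $\{\hhh'+1, \ldots, \hhh\}$ while respecting Equation \eqref{goodchoice}.

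With these bijections fixed, I would pass each summand of $\VV$ and $\PP$ through $\pi$. A generator $s_e$, $s_e^*$, $p_w$ or $p_v$ vanishes in $C^*(E)/\III_{(H,S)}$ exactly when its source or range lies in $H$ (using $p_v \in \III_{(H,S)}$ for $v \in H$ together with hereditariness); the surviving summands are therefore those with source in block $3$ (i.e.\ $E^0_\textnormal{reg} \setminus H$) and, in the edge cases, range outside $H$. Since $\pi(s_e) = \psi(s_e)$ and $\pi(p_v) = \psi(p_v)$ for $v, r(e) \in E^0 \setminus H$, these surviving summands reassemble into $\psi(V')$ and $\psi(P')$, and by the compatibility of bijections every surviving matrix unit $\EE{\bullet}{\bullet}$ lands inside the top-left $\hhh' \times \hhh'$ block. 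Hence
\begin{equation*}
  \pi(\UU) = \pi(\VV) + \bigl(1 - \pi(\PP)\bigr) = \begin{bmatrix} \psi(U') & 0 \\ 0 & 1_{\hhh - \hhh'} \end{bmatrix},
\end{equation*}
and passing to $K_1$-classes yields the desired equality.

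The main obstacle is the summand-by-summand bookkeeping: verifying the vanishing/surviving dichotomy for each of the four summand types in $\VV$ and $\PP$, and checking that every surviving matrix-unit position genuinely falls in the top-left block under bijections that continue to respect \eqref{goodchoice} on the $H$-supported extras.
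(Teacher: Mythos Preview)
Your proposal is correct and follows essentially the same approach as the paper: decompose $\xx=(\yy,\zz)$, observe that the index sets for $(\zz,0)$ in $E\setminus H$ sit inside those for $\xx$ in $E$, choose compatible bijections (using Remark~\ref{indep}), and check that applying $\pi$ kills precisely the summands of $\VV$ and $\PP$ supported over $H$ so that $\pi(\UU)$ becomes $\psi(\UU[(\zz,0)])$ stabilized by an identity block. The paper is slightly terser---it writes $\pi(\UU[\xx])=\psi(\UU[(\zz,0)])$ without spelling out the $\operatorname{diag}(\psi(U'),1_{\hhh-\hhh'})$ form---but the content is identical.
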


\begin{proof} Fix 
  \begin{equation*}
    \xx=\begin{lbmatrix}\yy\\\zz \end{lbmatrix}\in {\ker\begin{lbmatrix}
        A^t-I&X^t\\\alpha^t&\xi^t\\0&B^t-I\\0&\beta^t\\0&\eta^t\end{lbmatrix}}.
  \end{equation*} 
  Then $\begin{lbmatrix}\zz\\ 0 \end{lbmatrix}\in 
  \ker\begin{lbmatrix}
    B^t-I&\Gamma^t\\\beta^t&\gamma^t\\\eta^t&Z^t-I\end{lbmatrix}$ and we
  furthermore have that
  $\upindex[\begin{lbmatrix}\zz\\
    0 \end{lbmatrix}]\subseteq\upindex[\xx]$ and $\downindex[\begin{lbmatrix}\zz\\
    0 \end{lbmatrix}]\subseteq\downindex[\xx]$. Thus if we let $\hhh$ be
  the number of elements in $\upindex[\xx]$ (and in
  $\downindex[\xx]$), and we let $\hhh'$ denote the number of elements in $\upindex[\begin{lbmatrix}\zz\\
    0 \end{lbmatrix}]$ (and in $\downindex[\begin{lbmatrix}\zz\\
    0 \end{lbmatrix}]$), then we can choose the bijections 
  \begin{align*}
  &\upm{\cdot}:\upindex\to \{1,\dots,\hhh\}&
  &\downm{\cdot}:\downindex\to \{1,\dots,\hhh\}
  \shortintertext{and}
  &\upm{\cdot}':\upindex[\begin{lbmatrix}\zz\\
    0 \end{lbmatrix}]\to \{1,\dots,\hhh'\}
  &&\downm{\cdot}':\downindex[\begin{lbmatrix}\zz\\
    0 \end{lbmatrix}]\to \{1,\dots,\hhh'\}
  \end{align*}
  such that $\upm{\cdot}$ is an extension of $\upm{\cdot}'$, and
  $\downm{\cdot}$ is an extension of $\downm{\cdot}'$. 
  We then have that
  \begin{eqnarray*}
    \pi(\VV[\xx]) &=&\pi\left(\sum_{\substack{ 1\leq i\leq x_w\\  s(e)=w
        }} s_e \EE{\upm{w,i}}{\downm{e,i}}+
      \sum_{\substack{ 1\leq i\leq -x_w\\  s(e)=w
        }} s_e^* \EE{\upm{e,i}}{\downm{w,i}}\right)\\
    &=&\sum_{\substack{ 1\leq i\leq z_w\\  s(e)=w
      }} \pi(s_e) \EE{\upm{w,i}}{\downm{e,i}}+
    \sum_{\substack{ 1\leq i\leq z_w\\  s(e)=w
      }} \pi(s_e^*) \EE{\upm{e,i}}{\downm{w,i}}\\
    &=&\psi\left(\sum_{\substack{ 1\leq i\leq z_w\\  s(e)=w
        }} s_e \EE{\upm{w,i}}{\downm{e,i}}+
      \sum_{\substack{ 1\leq i\leq -z_w\\  s(e)=w
        }} s_e^* \EE{\upm{e,i}}{\downm{w,i}}\right)\\
    &=& \psi\left(\VV[\begin{lbmatrix}\zz\\
      0 \end{lbmatrix}]\right)
  \end{eqnarray*}
  since $s_e\in \III_{(H,S)}=\ker\pi$ when $s(e)$ (and thus $r(e)$) lies
  in $H$, and $z_w=x_w$ when $w\notin H$. 
  A similar computation for $\PP[\xx]$ shows
  that $\pi(\PP[\xx])=\psi\left(\PP[\begin{lbmatrix}\zz\\0 \end{lbmatrix}]\right)$. Thus 
  $\pi(\UU[\xx])=\psi\left(\UU[\begin{lbmatrix}\zz\\0 \end{lbmatrix}]\right)$ and
  \begin{equation*}\pi_*\circ\chi_1(\xx)=\bigl[\pi(\UU[\xx])\bigr]_1
  =\left[\psi(\UU[\begin{lbmatrix}\zz\\
    0 \end{lbmatrix}])\right]_1
  =\chi''\bigl(\begin{lbmatrix}\zz\\
    0 \end{lbmatrix}\bigr)
  =\chi_1''\circ I_1\circ P_2(\xx).\end{equation*}
\end{proof}

\begin{claim} \label{claim:inj}
  $\iota_*\circ\chi_1'=\chi_1\circ I_1$.
\end{claim}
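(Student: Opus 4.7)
The plan is to match the two unitaries entry-by-entry via a carefully chosen common index, and then observe that the composite $*$-homomorphism $\iota\circ\phi\colon C^*(E_{(H,S)})\to C^*(E)$ sends one onto the other. By definition $\chi_1'(\yy)=\phi_*([U']_1)$, where $U'\in\matrM_\hhh(C^*(E_{(H,S)})^\sim)$ is the unitary of Proposition \ref{prop:k-groups} built for the graph $E_{(H,S)}$ (with $\rV=E^0_\textnormal{reg}\cap H$) and the vector $\yy$; meanwhile $\chi_1\circ I_1(\yy)=[\UU[I_1(\yy)]]_1$ is built inside $\matrM_\hhh(C^*(E)^\sim)$ using the universal Cuntz--Krieger $E$-family. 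The goal is to prove $(\iota\circ\phi)^\sim(U')=\UU[I_1(\yy)]$, from which the claim follows on passing to $K_1$.

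The key observation is that the index data agree. Since $\yy$ lies in the domain $\ZZ^{E^0_\textnormal{reg}\cap H}$, only edges $e$ with $s(e)\in E^0_\textnormal{reg}\cap H$ and vertices $v\in E^0_\textnormal{reg}\cap H$ contribute to $\upindex[I_1(\yy)]$ and $\downindex[I_1(\yy)]$. Hereditariness of $H$ places every such edge in $s^{-1}(H)\subseteq E^1_{(H,S)}$ with range in $H\subseteq E^0_{(H,S)}$, so these index sets coincide on the nose with the corresponding index sets built from the graph $E_{(H,S)}$ for the vector $\yy$, and the common cardinality $\hhh$ is the same. By Remark \ref{indep} I may fix a single pair of bijections $\upm{\cdot},\downm{\cdot}$ and use it in both constructions without altering the $K_1$-classes. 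With this matched choice, Definition \ref{key} produces $U'$ and $\UU[I_1(\yy)]$ as formally identical sums of matrix units, differing only in whether the coefficients are the generators of $C^*(E_{(H,S)})$ or those of $C^*(E)$. Every vertex $v$ that actually appears in these formulas lies in $H$ and every edge that appears lies in $E^1_{(H,S)}$, so $\phi$ sends each coefficient to its counterpart. Hence $(\iota\circ\phi)^\sim(U')=\UU[I_1(\yy)]$, and $\iota_*\circ\chi_1'(\yy)=\iota_*\phi_*([U']_1)=[\UU[I_1(\yy)]]_1=\chi_1\circ I_1(\yy)$.

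The main subtlety to guard against is that $\phi(p_{v_0})=p_{v_0}^H$ for $v_0\in S$: if any index in the construction of $U'$ referred to such a $v_0$, the identification would be genuinely more delicate, because gap projections would enter. The argument above works because the domain of $\yy$ excludes $S$, so no $v_0\in S$ ever appears as a vertex label in any sum in Definition \ref{key}; the translation between the two unitaries is therefore purely formal. This observation, which uses only hereditariness of $H$ together with the location of the support of $\yy$, is the one point in the proof where the structure of the admissible pair $(H,S)$ must be inspected explicitly.
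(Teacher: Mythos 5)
Your proposal is correct and follows essentially the same route as the paper: one chooses the bijections indexing $\upindex[\yy]$ and $\downindex[\yy]$ (for $E_{(H,S)}$) to agree with those for $I_1(\yy)$ (for $E$), notes that hereditariness of $H$ makes the two index sets literally coincide, and then observes that $\iota\circ\phi$ carries the unitary of Definition~\ref{key} for $E_{(H,S)}$ onto the one for $E$ term by term. Your explicit remark that no vertex of $S$ can appear in the sums --- so that $\phi$ never has to convert a $p_{v_0}$ into a gap projection $p_{v_0}^H$ --- is a detail the paper leaves implicit, but it is part of the same argument rather than a different one.
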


\begin{proof}
  Fix $\xx\in\ker \begin{lbmatrix}A^t-I\\\alpha^t\\0\end{lbmatrix}$. 
  This follows like in Claim \ref{ciii} by choosing the bijections
  \begin{gather*}
    \upm{\cdot}:\upindex[\begin{lbmatrix}\xx\\0\end{lbmatrix}]\to
    \{1,\dots,\hhh\}\qquad
    \upm{\cdot}:\upindex\to
    \{1,\dots,\hhh\}
    \shortintertext{and}
    \downm{\cdot}:\downindex[\begin{lbmatrix}\xx\\0\end{lbmatrix}]\to \{1,\dots,\hhh\}\qquad
    \downm{\cdot}:\downindex\to \{1,\dots,\hhh\}
  \end{gather*}
  to be pairwise equal. 
\end{proof}

\begin{claim}
  $\partial_0=0$.
\end{claim}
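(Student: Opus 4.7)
The plan is to exploit the fact that, by our choice of generators, every class in $K_0\bigl(C^*(E)/\III_{(H,S)}\bigr)$ lies in the image of $\pi_*$, so that exactness of the six-term sequence immediately forces $\partial_0 = 0$.

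By Proposition~\ref{prop:k-groups}(1) applied to the relative graph algebra $C^*(E\setminus H, S)$ --- whose $K_0$ is identified with $K_0\bigl(C^*(E)/\III_{(H,S)}\bigr)$ via the isomorphism $\psi_*$ introduced at the start of this section --- the abelian group $K_0\bigl(C^*(E)/\III_{(H,S)}\bigr)$ is generated by the classes $\bigl[p_v+\III_{(H,S)}\bigr]_0$ for $v\in E^0\setminus H$. Since $\partial_0$ is a group homomorphism, it suffices to verify $\partial_0\bigl(\bigl[p_v+\III_{(H,S)}\bigr]_0\bigr)=0$ for each such $v$.

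For any $v\in E^0\setminus H$ (including the breaking vertices $v\in S\subseteq B_H$, which by definition lie outside $H$), the vertex projection $p_v\in C^*(E)$ is itself a projection, and $\pi(p_v)=p_v+\III_{(H,S)}$. Therefore
\[
\bigl[p_v+\III_{(H,S)}\bigr]_0 \;=\; \pi_*\bigl([p_v]_0\bigr)\;\in\;\operatorname{im}\pi_*,
\]
and exactness at $K_0\bigl(C^*(E)/\III_{(H,S)}\bigr)$ yields $\partial_0\circ\pi_*=0$. Hence $\partial_0$ vanishes on every generator, and the claim follows.

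There is no real obstacle: the essential observation is that the generators of $K_0$ of the quotient chosen in Section~3 all lift to genuine projections in $C^*(E)$, not merely to self-adjoint elements. This is precisely why the conclusion holds in full generality, without invoking Condition~(K) or the Lin--R\o{}rdam theorem on vanishing of the exponential map in the real-rank-zero setting.
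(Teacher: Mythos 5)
Your proof is correct, but it runs through the opposite end of the exact sequence from the paper's. The paper derives $\partial_0=0$ from exactness at $K_1\bigl(\III_{(H,S)}\bigr)$: Claim~\ref{claim:inj} shows $\iota_*\circ\chi_1'=\chi_1\circ I_1$ with $I_1$ injective and the $\chi_1$'s isomorphisms, so $\iota_*:K_1\bigl(\III_{(H,S)}\bigr)\to K_1\bigl(C^*(E)\bigr)$ is injective and hence $\operatorname{im}\partial_0\subseteq\ker\iota_*=0$. You instead argue at $K_0\bigl(C^*(E)/\III_{(H,S)}\bigr)$: by Proposition~\ref{prop:k-groups}(1) applied to $C^*(E\setminus H,S)$ and transported by $\psi_*$, that group is generated by the classes $[p_v+\III_{(H,S)}]_0$ with $v\in E^0\setminus H$, each of which is $\pi_*\bigl([p_v]_0\bigr)$, so $\pi_*$ is surjective on $K_0$ and exactness gives $\ker\partial_0=\operatorname{im}\pi_*$ equal to everything. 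Both are one-line exactness arguments once the input is in place; yours has the merit of being independent of all the $K_1$ machinery (it needs only the $K_0$ generators and the identification of the quotient with a relative graph algebra), and it makes transparent the underlying reason the exponential map vanishes here without Condition~(K) or real rank zero, namely that the chosen generators of $K_0$ of the quotient lift to genuine projections. The paper's version is essentially free given that Claim~\ref{claim:inj} must be proved anyway for the main theorem. One small point worth making explicit in your write-up: the generators $[p_{v_0}+\III_{(H,S)}]_0$ for $v_0\in S$ also lift to the projections $p_{v_0}\in C^*(E)$ (the gap projections play no role on the $\pi$ side), which you do address correctly.
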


\begin{proof}
  It follows from Claim \ref{claim:inj} that
  $\iota_*:K_1\bigl(\III_{(H,S)}\bigr)\to K_1\bigl(C^*(E)\bigr)$ is injective. Thus
  $\operatorname{im}(\partial_0)=0$ from which it follows that $\partial_0=0$.
\end{proof}

\begin{claim} 
  $\partial_1\circ\chi_1''=\chi_0'\circ \begin{lbmatrix}X^t&0\\\xi^t&0\\0&I\end{lbmatrix}$. 
\end{claim}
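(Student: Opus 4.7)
The plan is to compute $\partial_1\circ\chi_1''(\xx)$ directly via the standard formula for the index map in terms of a partial-isometry lift. Fix $\xx=\begin{lbmatrix}\yy\\\zz\end{lbmatrix}$ in the kernel, so that $\chi_1''(\xx)=[\psi(\UU[\xx])]_1$. The strategy is to observe that the family $\{p_v:v\in E^0\setminus H\}\cup\{s_e:e\in E^1,\,r(e)\notin H\}$ of elements of $C^*(E)$ is a Toeplitz-Cuntz-Krieger $(E\setminus H)$-family whose image under $\pi$ corresponds, via $\psi$, to the universal generators of $C^*(E\setminus H,S)$. Applying Definition \ref{key} to this family inside $C^*(E)$ produces elements $\VVt,\PPt\in\matrM_\hhh(C^*(E))$, and by Lemma \ref{V-pi-lem} the element $\UUt:=\VVt+(1-\PPt)$ is a partial isometry. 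An argument analogous to Claim \ref{ciii} shows $\pi(\UUt)=\psi(\UU[\xx])$, so $\UUt$ is the desired lift.

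Next I will compute the defects $1-\UUt\UUt^*$ and $1-\UUt^*\UUt$. Using Lemma \ref{foureqs}, the second summands in \eqref{myp}/\eqref{myvvstar} (respectively \eqref{myp}/\eqref{myvstarv}) agree, yielding
\begin{equation*}
1-\UUt\UUt^* \ =\ \sum_{1\leq i\leq x_w}\Bigl(p_w-\sum_{\substack{s(e)=w\\ r(e)\notin H}}s_es_e^*\Bigr)\EE{\upm{w,i}}{\upm{w,i}}
\end{equation*}
together with the analogous formula for $1-\UUt^*\UUt$ involving $1\leq i\leq -x_w$ and $\EE{\downm{w,i}}{\downm{w,i}}$. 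Taking $K_0$-classes in $K_0(\III_{(H,S)})$, the index formula collapses to
\begin{equation*}
\partial_1\circ\chi_1''(\xx)\ =\ \sum_{w\in E^0\setminus H}x_w\Bigl[p_w-\sum_{\substack{s(e)=w\\ r(e)\notin H}}s_es_e^*\Bigr]_0.
\end{equation*}

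Finally I will identify this with $\chi_0'\circ\begin{lbmatrix}X^t&0\\\xi^t&0\\0&I\end{lbmatrix}(\xx)$ by splitting the sum into the two cases $w\in E^0_\textnormal{reg}\setminus H$ and $w\in S$. In the first case $x_w=y_w$, and (CK2) inside $C^*(E)$ rewrites $p_w-\sum_{r(e)\notin H}s_es_e^*$ as $\sum_{s(e)=w,\,r(e)\in H}s_es_e^*$; using $[s_es_e^*]_0=[p_{r(e)}]_0=\chi_0'(\ee_{r(e)})$ together with the fact that the transposed blocks $X^t$ and $\xi^t$ count precisely the edges from $w$ into the regular (respectively singular) vertices of $H$, this case contributes $\chi_0'(X^t\yy,\,\xi^t\yy,\,0)$. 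In the second case $x_w=z_w$, and the bracketed expression is by definition the gap projection $p_w^H=\phi(p_w)$, giving $[p_w^H]_0=\chi_0'(\ee_w)$ and total contribution $\chi_0'(0,0,\zz)$. Summing the two yields the claim.

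The main obstacle is really one of bookkeeping: recognising the two distinct combinatorial mechanisms by which the local defect $p_w-\sum_{r(e)\notin H}s_es_e^*$ lands in the ideal --- through (CK2) when $w$ is regular in $E$, or through the gap-projection construction when $w$ lies in $S$ --- and matching them to the two column blocks of the index matrix. The rest is a direct application of Lemma \ref{foureqs} and the universal-property definition of $\chi_0'$.
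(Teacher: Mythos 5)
Your proposal is correct and takes essentially the same route as the paper's proof: the same partial-isometry lift $\UUt=\VVt+(1-\PPt)$ built by applying Definition~\ref{key} to the Toeplitz--Cuntz--Krieger $(E\setminus H)$-family sitting inside $C^*(E)$, the same defect computation $1-\UUt\UUt^*=\sum_{1\leq i\leq x_w}\bigl(p_w-\sum_{s(e)=w,\,r(e)\notin H}s_es_e^*\bigr)\EE{\upm{w,i}}{\upm{w,i}}$ via Lemma~\ref{foureqs}, and the same final case split using (CK2) at the vertices of $E^0_\textnormal{reg}\setminus H$ and the gap projections $p_{v_0}^H=\phi(p_{v_0})$ at the vertices of $S$. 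No gaps.
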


\begin{proof}
  Fix $\xx=\begin{lbmatrix}\yy\\\zz \end{lbmatrix}\in \ker\begin{lbmatrix}
    B^t-1&\Gamma^t\\\beta^t&\gamma^t\\\eta^t&Z^t-I\end{lbmatrix}$. 
  We lift $\psi(\VV[\xx])$ and $\psi(\PP[\xx])$ to 
  \begin{equation*}
    \VVt\ =\sum_{\substack{ 1\leq i\leq x_w\\
        \mathclap{s(e)=w, r(e)\notin H}}} s_e \,
    \EE{\upm{w,i}}{\downm{e,i}}\ +
    \sum_{\substack{ 1\leq i\leq -x_w\\  
        \mathclap{s(e) = w, r(e)\notin H}}} 
    s_e^* \, \EE{\upm{e,i}}{\downm{w,i}}
  \end{equation*} 
  and
  \begin{equation*}
    \PPt\ =\sum_{1\leq i\leq x_w} p_w\EE{\upm{w,i}}{\upm{w,i}}\ +
    \sum_{\substack{ 1\leq i\leq -x_w\\  
        \mathclap{s(e) = w, r(e) =v} \\  v\notin H}}
    p_v\EE{\upm{e,i}}{\upm{e,i}},
  \end{equation*}
  respectively, in $\matrM_{\hhh}\bigl(C^*(E)\bigr)$. 
  Since $\bigl\{s_e,p_v:e\in r^{-1}(E^0\setminus H),\ v\in E^0\setminus H\bigr\}$
  is a Toeplitz-Cuntz-Krieger $(E\setminus H)$-family, it follows from
  Lemma \ref{V-pi-lem} that $\VVt$ is a partial isometry and that
  $\PPt\VVt=\VVt\PPt=\VVt$. It follows that also $\UUt:=\VVt+(1-\PPt)$ is a
  partial isometry. Hence, to compute the value of the index map on
  $[\UU]_1$, we just need to compute the defect of $\UUt$ in
  $K_0(\III_{(H,S)})$, cf. \cite[Proposition 9.2.2]{mrflnl:ikc}. 
  We have, using Lemma \ref{foureqs}, that
  \begin{eqnarray*}
    1-\UUt\UUt^*&=&\PPt-\VVt\VVt^*\\
    &=&\sum_{1\leq i\leq x_w}\left(p_w-
      \sum_{\substack{\wev\\  v\not\in H}}
      s_es_e^*\right)\EE{\upm{w,i}}{\upm{w,i}}\\
    &=&\sum_{1\leq i\leq y_w}\left(\sum_{\wev} s_es_e^*-
      \sum_{\substack{\wev\\  v\not\in H}}
      s_es_e^*\right)\EE{\upm{w,i}}{\upm{w,i}}\\
    &&\quad + \sum_{1\leq i\leq z_{v_0}}\left(p_{v_0}-
      \sum_{  s(e)=v_0,r(e)\notin H} s_es_e^*\right)\EE{\upm{v_0,i}}{\upm{v_0,i}}\\
    &=&\sum_{1\leq i\leq y_w}
    \sum_{\substack{\wev\\  v\in H}}
    s_es_e^*\EE{\upm{w,i}}{\upm{w,i}}\
    + \sum_{1\leq i\leq z_{v_0}}p^H_{v_0} \EE{\upm{v_0,i}}{\upm{v_0,i}}.
\end{eqnarray*}
Passing to the $K_0$-group and using that $s_e\in C^*(E_H)$,  we get
\begin{eqnarray*}
  \bigl[1-\UUt\UUt^*\bigr]_0&=&\sum_{1\leq i\leq y_w}
  \sum_{\substack{\wev\\  v\in H}}
  [s_es_e^*]_0\
  +\sum_{1\leq i\leq z_{v_0}}\bigl[p^H_{v_0}\bigr]_0\\
  &=&\sum_{0<y_w} y_w
  \sum_{\substack{\wev\\  v\in H}}
  [s_e^*s_e]_0\
  +\sum_{0<z_{v_0}}z_{v_0}\bigl[p_{v_0}^H\bigr]_0
  \\
  &=&\sum_{0<y_w} y_w
  \sum_{\substack{\wev\\  v\in H}} [p_v]_0\
  +\sum_{0<z_{v_0}}z_{v_0}\bigl[p_{v_0}^H\bigr]_0.
\end{eqnarray*}
The computation for $1-\UUt^*\UUt$ is similar, and we get
\begin{equation*}
[1-\UUt\UUt^*]_0-
[1-\UUt^*\UUt]_0=
\sum_{0\not=y_w} y_w
\sum_{\substack{\wev\\  v\in H}} [p_v]_0\
+\sum_{z_{v_0}\ne 0}z_{v_0}\bigl[p_{v_0}^H\bigr]_0.
\end{equation*}
By comparison,
\begin{eqnarray*}
  \chi_0'\left(\begin{lbmatrix}X^t&0\\\xi^t&0\\0&I\end{lbmatrix}\xx\right)
  &=& \chi_0'\left(\begin{lbmatrix}X^t\yy\\\xi^t\yy\\\zz\end{lbmatrix}
    +\operatorname{im}\begin{lbmatrix}A^t-1\\\alpha^t\\0\end{lbmatrix}\right) \\
  &=&\sum_{y_w\not=0}y_w\left(\sum_{\substack{ v\in {E^0_\textnormal{reg} \cap H}
      }}X_{v,w}[p_v]_0 \ +\sum_{\substack{ v\in E^0_\textnormal{sing} \cap H
      }}\xi_{v,w}[p_v]_0\right)\\
  &&\quad+\sum_{z_{v_0}\ne 0}z_{v_0}\bigl[p_{v_0}^H\bigr]_0\\
  &=&\sum_{x_w\not=0}y_w\left(\sum_{\substack{\wev\\  v\in {E^0_\textnormal{reg} \cap H}
      }}[p_v]_0 \ +\sum_{\substack{\wev\\  v\in E^0_\textnormal{sing} \cap H
      }}[p_v]_0\right)\\
  &&\quad+\sum_{z_{v_0}\ne 0}z_{v_0}\bigl[p_{v_0}^H\bigr]_0\\
  &=&\sum_{x_w\not=0}y_w\sum_{\substack{\wev\\  v\in H
    }}[p_v]_0\ +\sum_{z_{v_0}\ne 0}z_{v_0}\bigl[p_{v_0}^H\bigr]_0,
\end{eqnarray*}
completing the proof.
\end{proof}


\begin{thebibliography}{10}

\bibitem{pamamep:nkga}
P.~Ara, M.~A. Moreno, and E.~Pardo.
\newblock Nonstable {$K$}-theory for graph algebras.
\newblock {\em Algebr. Represent. Theory}, 10(2):157--178, 2007.

\bibitem{tbdpirws:crg}
T.~Bates, D.~Pask, I.~Raeburn, and W.~Szymanski.
\newblock {$C^*$}-algebras of row-finite graphs.
\newblock {\em New York J. Math.}, 6:307--324, 2000.

\bibitem{lgbgkp:crrz}
L.G. Brown and G.K. Pedersen.
\newblock {$C^*$}-algebras of real rank zero.
\newblock {\em J.\ Funct.\ Anal.}, 99:131--149, 1991.

\bibitem{jc:hgsec}
J.~Cuntz.
\newblock On the homotopy groups of the space of endomorphisms of a {$C^{\ast}
  $}-algebra (with applications to topological {M}arkov chains).
\newblock In {\em Operator algebras and group representations, {V}ol. {I}
  ({N}eptun, 1980)}, volume~17 of {\em Monogr. Stud. Math.}, pages 124--137.
  Pitman, Boston, MA, 1984.

\bibitem{ddmt:ckegc}
D.~Drinen and M.~Tomforde.
\newblock Computing {$K$}-theory and {Ext} for graph {$C^*$}-algebras.
\newblock {\em Illinois J. Math.}, 46:81--91, 2002.

\bibitem{segrer:ccfis}
S.~Eilers, G.~Restorff, and E.~Ruiz.
\newblock Classifying {$C^*$}-algebras with both finite and infinite
  subquotients.
\newblock Preprint, arXiv:0801.0324v3, 2010.

\bibitem{semt:cnga}
S.~Eilers and M.~Tomforde.
\newblock On the classification of nonsimple graph algebras.
\newblock {\em Math. Ann.}, 346:393--418, 2010.

\bibitem{njfir:thb}
N.J. Fowler and I.~Raeburn.
\newblock The {T}oeplitz algebra of a {H}ilbert bimodule.
\newblock {\em Indiana Univ. Math. J.}, 48(1):155--181, 1999.

\bibitem{jaj:rrcag}
J.A Jeong.
\newblock Real rank of {$C^*$}-algebras associated with graphs.
\newblock {\em J. Aust. Math. Soc.}, 77(1):141--147, 2004.

\bibitem{tk:cac}
T.~Katsura.
\newblock On {$C^*$}-algebras associated with {$C^*$}-correspondences.
\newblock {\em J. Funct. Anal.}, 217(2):366--401, 2004.

\bibitem{psmmt:atc}
P.S. Muhly and M.~Tomforde.
\newblock Adding tails to {$C^*$}-correspondences.
\newblock {\em Doc. Math.}, 9:79--106, 2004.

\bibitem{wlp:kacgc}
W.~Paschke.
\newblock {$K$}-theory for actions of the circle group on {$C^*$}-algebras.
\newblock {\em J. Operator Theory}, 6(1):125--133, 1981.

\bibitem{mr:ccka}
M.~R{\o}rdam.
\newblock Classification of {C}untz-{K}rieger algebras.
\newblock {\em $K$-Theory}, 9(1):31--58, 1995.

\bibitem{mrflnl:ikc}
M.~R{\o}rdam, F.~Larsen, and N.~Laustsen.
\newblock {\em An introduction to {$K$}-theory for {$C^*$}-algebras}, volume~49
  of {\em London Mathematical Society Student Texts}.
\newblock Cambridge University Press, Cambridge, 2000.

\bibitem{mt:okgc}
M.~Tomforde.
\newblock The ordered {$K_0$}-group of a graph {$C^*$}-algebra.
\newblock {\em C. R. Math. Acad. Sci. Soc. R. Can.}, 25(1):19--25, 2003.

\end{thebibliography}
\end{document}